\newtheorem{theorem}{Theorem}[section]
\newtheorem{lemma}[theorem]{Lemma}
\newtheorem{corollary}[theorem]{Corollary}
\newtheorem{proposition}[theorem]{Proposition}
\theoremstyle{definition}
\newtheorem{example}[theorem]{Example}
\theoremstyle{remark}
\newtheorem{remark}[theorem]{Remark}
\numberwithin{equation}{section}
\newcommand{\rn}{\mathbb{R}^n}
\newcommand{\cH}{\mathcal{H}}
\newcommand{\R}{\mathbb{R}}
\newcommand{\osc}{\operatorname{osc}}
\renewcommand{\tilde}{\widetilde}
\newcommand{\loc}{\operatorname{loc}}
\newcommand{\Langle}{\left \langle}
\newcommand{\Rangle}{\right \rangle}
\DeclareMathOperator{\divr}{div}
\newcommand{\defeq}{\vcentcolon=}
\def\XXint#1#2#3{{\setbox0=\hbox{$#1{#2#3}{\int}$ }
\vcenter{\hbox{$#2#3$ }}\kern-.6\wd0}}
\title[Harnack inequality for the Finsler $\gamma$-Laplacian]{A Harnack inequality for weak solutions of the Finsler $\gamma$-Laplacian}
\author{Max Goering}
\thanks{Max Planck Institute, Leipzig, DE. \ \ {\it Email}: max.goering@mis.mpg.de}
\thanks{The author was partially supported by FRG DMS-1853993.}
\thanks{MSC: 35D30, 35J70}
\thanks{Data sharing not applicable to this article as no datasets were generated or analysed during the current study.}
\email{goering@mis.mpg.de}
\begin{document}
\maketitle

\begin{abstract}
We study regularity of the Finsler $\gamma$-Laplacian, a general class of degenerate elliptic PDEs which naturally appear in anisotropic geometric problems. Precisely, given any strictly convex family of $C^{1}$-norms $\{ \rho_{x}\}$ on $\R^{n}$ and $\gamma > 1$, we consider the $W^{1,\gamma}(\Omega)$ solutions of the anisotropic PDE
$$
\displaystyle \int_{\Omega} \Langle \rho_{x}(Du)^{\gamma-1} (D \rho_{x})(Du), D \varphi \Rangle = \int_{\Omega} \vec{F} \cdot D \varphi + f \varphi \qquad \forall \varphi \in W^{1,\gamma^{\prime}}_{0}(\Omega).
$$
Under the mild assumption $|\xi|^{-1} \rho_{x}( \xi) \in [\nu, \Lambda]$ for all $(x,\xi) \in \Omega \times \R^{n}$ and some $0 < \nu \le \Lambda < \infty$ we perform a Moser iteration, verifying that sub- and super-solutions satisfy one-sided $\| \cdot \|_{\infty}$ bounds, which together imply solutions are locally bounded. When $u$ is non-negative this also implies a (weak) Harnack inequality. If $f, \vec{F} \equiv 0$ weak solutions also benefit from a strong maximum principle, and a Liouville-type theorem. 
\end{abstract}

\section{Introduction}

In his celebrated work on minimal surfaces \cite{DG54,DG55,DG58}, De Giorgi framed minimal surfaces as the boundaries of sets of locally finite perimeter. In this setting, he showed that if $\partial E$ is flat near $x$ then in fact $\partial E$ is smooth near $x$. The method is by now well-known and has been adapted to many settings and can roughly be summarized in the following 4 steps: 

\noindent (1a) flatness implies that $\partial E$ is mostly the graph of a Lipschitz function $u$, and (1b) the Lipschitz constant can be arbitrarily small if the boundary is sufficiently flat. 

(2) Since $\partial E$ is a minimal surface, it turns out $u$ almost solves the minimal surface equation $$\displaystyle \divr \left( \frac{ Du}{\sqrt{1+|Du|^{2}}} \right) = 0.$$

(3) When the Lipschitz constant is sufficiently small, this means $u$ is basically harmonic. 

(4) From the regularity theory for harmonic functions one can deduce regularity of $\partial E$ near $x$.

We now motivate the study of the Finsler $\gamma$-Laplacian\footnote{In the literature this is typically called the Finsler $p$-Laplacian, but we wish to reserve $p$ to refer to the $\|\cdot\|_{\ell^{p}}$ and $\| \cdot \|_{L^{p}(\R^{n})}$ norms.}, a PDE which generalizes the Finsler Laplacian in an analogous manner to the way the $p$-Laplacian generalizes the Laplacian. To this end, we formally apply De Giorgi's method to an anisotropic minimal surface. Given a set of locally finite perimeter $E \subset \R^{n+1}$ consider the energy
\begin{equation} \label{e:lpsenergy}
\Phi_{p}(E ; A) \defeq \int_{\partial^{*}E \cap A} \| \nu_{E}\|_{\ell^{p}} d \cH^{n}.
\end{equation}

Simplifying our summary of De Giorgi's method a little further, in lieu of (1a) we assume $\partial^{*}E \cap A$ completely coincides with the graph of some Lipschitz $u : \Omega \to \R$. Then,
\begin{equation} \label{e:lpgenergy}
\Phi_{p}(E;A) = \int_{\Omega} \| (-Du,1)\|_{\ell^{p}}.
\end{equation}
Analogous to (2), by computing the outer-variation of \eqref{e:lpgenergy} one discovers $u$ solves the $\| \cdot \|_{\ell^{p}}$-minimal surface equation
\begin{equation} \label{e:lpminsurf}
\divr \left( \frac{ \partial_{i} u |\partial_{i} u|^{p-2}}{\|(-Du,1)\|_{\ell^{p}}^{p-1}} \right) = 0.
\end{equation}
By (1b) we can assume that $\|Du\|_{L^{\infty}(\Omega)} \ll 1$ and hence Taylor expand the elliptic matrix $ \displaystyle A(x) \defeq \frac{Id}{\|(-Du,1)\|_{\ell^{p}}^{p-1}}$ around $Du = 0$ to find that, in place of (3), formally $u$ almost solves the PDE
\begin{equation} \label{e:pseudop}
\divr \left( \partial_{i} u |\partial_{i} u|^{p-2} \right) = 0,
\end{equation}
which is sometimes called the Pseudo $p$-Laplacian or orthotropic $p$-Laplacian. Hence, to formally complete De Giorgi's method, step (4) would indicate one should try to recover regularity of $\partial E$ from the regularity theory for the pseudo $p$-Laplacian. Unfortunately, the best-known regularity theory for \eqref{e:pseudop} is that solutions are Lipschitz when $n \ge 3$, \cite{demengel2016lipschitz}, and $C^{1,\log}$ for $n = 2$, \cite{lindqvist2018regularity}.

In this paper we propose considering more generally the regularity of solutions to the Finsler $\gamma$-Laplacian, \eqref{e:inhomo}, a general class of PDEs which arise naturally in geometric problems. In the homogeneous setting, this simplifies to fixing some $\gamma > 1$, $\Omega \subset \R^{n}$, and a $C^{1}$, strictly-convex norm $\rho$, then considering the weak solutions of
\begin{equation} \label{e:thesimplepde}
\int_{\Omega} \Langle \rho(Du)^{\gamma-1} (D\rho)(Du), D \varphi \Rangle d \cH^{n}  = 0 \qquad \forall \varphi \in W^{1,\gamma^{\prime}}_{0}(\Omega).
\end{equation}

We note that in this setting, \eqref{e:thesimplepde} is precisely the outer-variation of the functional
\begin{equation} \label{e:functional}
\int_{\Omega} \rho(Du)^{\gamma} d \cH^{n}.
\end{equation}

\begin{example} \label{x:pde1}
Several immediate examples arise when considering \eqref{e:thesimplepde} and \eqref{e:functional}.
\begin{enumerate}
\item[(i)] If $\rho = | \cdot |$ and $\gamma = 2$ we recover the Laplacian/Dirichlet energy.
\item[(ii)] If $\rho = | \cdot |$ and $\gamma = p$ \eqref{e:thesimplepde} recovers the $p$-Laplacian.
\item[(iii)] If $\rho = \| \cdot \|_{\ell^{p}}$ and $\gamma = p$ \eqref{e:thesimplepde} recovers the pseudo $p$-Laplacian.
\item[(iv)] If $\rho(\xi) \defeq \langle A \xi, \xi \rangle^{1/2}$ for some positive definite matrix $A$, and $\gamma = 2$, \eqref{e:thesimplepde} recovers constant coefficient divergence form elliptic PDEs.
\end{enumerate}
\end{example}

In general, given some $\Omega \subset \R^{n}$ we consider a function $\rho : \Omega \times \R^{n} \to [0, \infty)$ so that $\rho(x, \cdot) = \rho_{x}$ is a norm for all $x \in \Omega$. Then, the inhomogeneous Finsler $\gamma$-Laplacian with respect to $\rho$ is given by
\begin{equation} \label{e:inhomo}
\int_{\Omega} \Langle \rho_{x}(Du))^{\gamma-1} (D \rho_{x})(Du) , D \varphi  \Rangle = \int_{\Omega} \Langle \vec{F}, D\varphi \Rangle + f \varphi \quad \forall \varphi \in W^{1,\gamma^{\prime}}(\Omega)
\end{equation}
where
\begin{equation}\label{e:Ff}
\vec{F} \in L^{q}_{\loc}(\Omega) \quad \text{and} \quad f \in L^{\frac{q}{\gamma^{\prime}}}_{\loc}(\Omega) \quad \text{for some } q > \frac{n}{\gamma-1}.
\end{equation}

\begin{example} \label{x:pde2} In the case $\vec{F} = 0$ and $f = 0$,
\begin{enumerate}
\item[(i)] If $A(x) \in \R^{n \times n}$ is uniformly elliptic for $x \in \Omega$, $\rho_{x}(\xi) = \langle A(x) \xi, \xi \rangle^{1/2}$, and $\gamma = 2$ then \eqref{e:inhomo} recovers all divergence form elliptic PDEs with $L^{\infty}$-coefficients.
\item[(ii)] If 
$$
\rho_{x}(\xi) = \frac{\| \xi \|_{\ell^{p}}}{\|(-Du(x),1)\|_{\ell^{p}}^{\frac{1}{p^{\prime}}}},
$$ 
then \eqref{e:inhomo} recovers the $\| \cdot \|_{\ell^{p}}$-minimal surface equation \eqref{e:lpminsurf}.
\end{enumerate}
\end{example}

The first main result is a sup-bound for non-negative weak subsolutions of \eqref{e:inhomo}.

\begin{theorem} Suppose that for some $1 < \gamma < n$, $u \in W^{1,\gamma}(\Omega)$ is a non-negative subsolution of \eqref{e:inhomo}, that $\rho: \Omega \times \rn \setminus \{0\} \to (0,\infty)$ satisfies \eqref{e:meas}, \eqref{e:p1h}, \eqref{e:cx}, and \eqref{e:eb}, and $\vec{F},f$, and $q$ satisfy \eqref{e:Ff}. If $0 < r < R < 1$ and $B_{R} \subset \Omega$, then for all $0 < p < \infty$, there exists $C = C(n,\gamma,\nu,p)$ so that
\begin{equation} \label{e:supbound}
\sup_{B_{r}} u \le C \left[ (R-r)^{-\frac{n}{p}} \|u\|_{L^{p}(B_{R})} + R^{\delta} \|\rho_{*}(x,\vec{F})\|^{\frac{1}{\gamma-1}}_{L^{q}(B_{R})} + R^{\gamma^{\prime} \delta} \|f\|_{L^{\frac{q}{\gamma^{\prime}}}}^{\frac{1}{\gamma-1}} \right]
\end{equation}
where $\delta = 1 - \frac{n}{q(\gamma-1)} > 0$.
\end{theorem}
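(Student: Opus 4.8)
The plan is to run a Moser iteration, exploiting the $\gamma$-homogeneity to put \eqref{e:inhomo} into $\gamma$-Laplacian form. First I would record the structural consequences of \eqref{e:p1h}, \eqref{e:cx}, \eqref{e:eb}. Writing $A(x,\xi)\defeq\rho(x,\xi)^{\gamma-1}(D\rho(x,\cdot))(\xi)$, positive $1$-homogeneity makes $D\rho(x,\cdot)$ $0$-homogeneous and Euler's identity gives $\langle (D\rho(x,\cdot))(\xi),\xi\rangle=\rho(x,\xi)$, so $\langle A(x,\xi),\xi\rangle=\rho(x,\xi)^{\gamma}$; combined with \eqref{e:eb} this yields, with constants depending only on $\nu$,
\[
\nu^{-1}|\xi|^{\gamma}\le\langle A(x,\xi),\xi\rangle,\qquad |A(x,\xi)|\le\nu|\xi|^{\gamma-1},\qquad |\langle\vec F,\zeta\rangle|\le\rho_{*}(x,\vec F)\,\rho(x,\zeta)\le\nu\,\rho_{*}(x,\vec F)|\zeta|,
\]
so \eqref{e:inhomo} is a $\gamma$-Laplacian-type equation with lower order terms $\vec F,f$. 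After translating so $B_R$ is centered at the origin I would set $k\defeq R^{\delta}\|\rho_{*}(x,\vec F)\|^{1/(\gamma-1)}_{L^{q}(B_R)}+R^{\gamma'\delta}\|f\|^{1/(\gamma-1)}_{L^{q/\gamma'}(B_R)}$ and $\bar u\defeq u+k$ (replacing $k$ by $\eps>0$ and sending $\eps\downarrow0$ if $k=0$). The exponents $\delta$ and $\gamma'\delta$ are exactly those that give $k$ the same scaling dimension as $u$; this is what will make the data absorbable.

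The core is the Caccioppoli inequality. Fixing $\vartheta\ge\gamma$ and a cutoff $\eta\in C_c^{\infty}(B_R)$ with $0\le\eta\le1$, I would test the subsolution inequality with $\varphi=\eta^{\gamma}\bar u^{\vartheta-\gamma+1}$ (admissible and non-negative since $u\ge0$ and $\bar u\ge k>0$; one truncates $\bar u$ at level $m$, tests, and lets $m\to\infty$). Expanding $D\varphi$, one keeps $\langle A(x,Du),Du\rangle=\rho(x,Du)^{\gamma}$ for the leading term, treats the $D\eta$ cross term by Young's inequality, and for the $\vec F$- and $f$-terms uses the pairing bound above, $\bar u\ge k$ to trade the "wrong" powers of $\bar u$ for $k^{1-\gamma}\bar u^{\vartheta}$, Hölder's inequality in $x$ with exponents $q$ and $q/\gamma'$, and finally the Gagliardo--Nirenberg--Sobolev inequality for $w\defeq\eta\bar u^{\vartheta/\gamma}$ to interpolate the intermediate Lebesgue norms of $w$ between $\|w\|_{L^{\gamma}}$ and $\|w\|_{L^{\gamma^{*}}}\lesssim\|Dw\|_{L^{\gamma}}$, absorbing the critical parts. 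This is exactly where $q>\tfrac n{\gamma-1}$ (i.e.\ $\delta>0$) enters: it makes $q/\gamma'>\tfrac n\gamma$, so the intermediate exponents are subcritical, and a short computation shows the $f$-term's interpolation exponent equals $1-\delta$ — precisely what makes $k^{1-\gamma}\|f\|_{L^{q/\gamma'}}$ and the Young parameter combine to produce no negative power of $R$ worse than the homogeneous $R^{-\gamma}$. The outcome is, with $\chi\defeq\tfrac n{n-\gamma}>1$ and $R/2\le r'<R'\le R$,
\[
\|\bar u\|_{L^{\vartheta\chi}(B_{r'})}\le\bigl(C\vartheta^{m_0}\bigr)^{1/\vartheta}(R'-r')^{-\gamma/\vartheta}\,\|\bar u\|_{L^{\vartheta}(B_{R'})},
\]
with $C=C(n,\gamma,\nu)$ and $m_0$ a fixed power.

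Then I would iterate over $\vartheta_j=\gamma\chi^{j}$ and radii $R_j=\tfrac R2(1+2^{-j})$: the product of the constants converges because $\sum_j\vartheta_j^{-1}=\tfrac n{\gamma^{2}}<\infty$ and $\sum_j j\chi^{-j}<\infty$, and $\gamma\sum_j\vartheta_j^{-1}=\tfrac n\gamma$, so
\[
\sup_{B_{R/2}}\bar u\le C(n,\gamma,\nu)\,R^{-n/\gamma}\|\bar u\|_{L^{\gamma}(B_R)}\le C\,R^{-n/\gamma}\|u\|_{L^{\gamma}(B_R)}+C\,k,
\]
using $\|\bar u\|_{L^{\gamma}(B_R)}\le\|u\|_{L^{\gamma}(B_R)}+c_nR^{n/\gamma}k$ and that the $R$-powers cancel on the comparable pair $(B_{R/2},B_R)$. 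To reach arbitrary $0<p<\infty$ I would use Hölder on $B_R$ (with $R<1$) for $p\ge\gamma$, and for $0<p<\gamma$ the standard interpolation--absorption argument over a chain of concentric balls, using $\|\bar u\|_{L^{\gamma}(B_{\varrho})}\le\|\bar u\|_{L^{\infty}(B_{\varrho})}^{1-p/\gamma}\|\bar u\|_{L^{p}(B_{\varrho})}^{p/\gamma}$ and Young's inequality to absorb a small multiple of $\sup\bar u$. This yields $\sup_{B_{\varrho/2}}u\le C(n,\gamma,\nu,p)\bigl[\varrho^{-n/p}\|u\|_{L^{p}(B_{\varrho})}+\varrho^{\delta}\|\rho_{*}(x,\vec F)\|^{1/(\gamma-1)}_{L^{q}(B_{\varrho})}+\varrho^{\gamma'\delta}\|f\|^{1/(\gamma-1)}_{L^{q/\gamma'}(B_{\varrho})}\bigr]$ for all $B_{\varrho}\subset\Omega$ with $\varrho<1$. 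Finally, given $0<r<R<1$ with $B_R\subset\Omega$, I would cover $B_r$ by the balls $B_{(R-r)/2}(y)$, $y\in B_r$; since $B_{R-r}(y)\subset B_R$, applying the last estimate with $\varrho=R-r$ centered at $y$, using monotonicity of the norms in the domain and $(R-r)^{\delta}\le R^{\delta}$, $(R-r)^{\gamma'\delta}\le R^{\gamma'\delta}$, gives \eqref{e:supbound}.

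The Moser machinery itself is routine once the $\gamma$-Laplacian structure has been recorded. I expect the main obstacle to be the Caccioppoli step: absorbing $\vec F$ and $f$ at \emph{every} iteration level so that (i) the constants depend only polynomially on $\vartheta$, so the infinite product converges, and (ii) no power of $R$ worse than the homogeneous $R^{-\gamma}$ is generated, so that the data contribution emerges as the clean additive term $k$ rather than carrying a spurious $(R-r)$-prefactor. This is precisely what forces $q>\tfrac n{\gamma-1}$ and dictates the exponents $\delta,\gamma'\delta$ in the definition of $k$; getting that bookkeeping right, rather than the iteration itself, is the real work.
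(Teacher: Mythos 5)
Your proposal is correct and follows essentially the same route as the paper: Moser iteration with test functions of the form $\eta^{\gamma}\bar u^{\beta}$ (truncated in $m$), the shift $\bar u=u+k$ with $k$ built from $\|\rho_{*}(x,\vec F)\|_{L^{q}}^{1/(\gamma-1)}$ and $\|f\|_{L^{q/\gamma'}}^{1/(\gamma-1)}$, absorption of the data via H\"older with exponents $q/\gamma'$, interpolation between $L^{\gamma}$ and $L^{\gamma^{*}}$, and Sobolev, followed by the standard iteration and the classical argument to pass from $p\ge\gamma$ to all $0<p<\infty$. The only differences are cosmetic (you reduce to Euclidean structure conditions at the outset and build the $R^{\delta}$ factors into $k$, whereas the paper keeps $\rho,\rho_{*}$ throughout and scales at the end), and your constants should also record the dependence on $\Lambda$ from \eqref{e:eb}.
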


The second main result is an inf-bound for non-negative weak supersolutions of \eqref{e:inhomo}.

\begin{theorem}
Suppose $1 < \gamma < n$ and $u \in W^{1,\gamma}(\Omega)$ is a non-negative supersolution to \eqref{e:inhomo}, for some $\rho$ satisfying \eqref{e:meas} - \eqref{e:eb}. Assume $\vec{F}$, $f$, and $q$ are as in \eqref{e:Ff}. If $0 < r < R < 1$ and $B_{R} \subset \Omega$, then for all $0 < p < \frac{n(\gamma-1)}{n-\gamma}$ and all $0 < \theta < \tau < 1$, there exists $C = C(n, \gamma, \nu, \Lambda, q, p, \theta, \tau) > 0$ so that

$$
\inf_{B_{\theta R}} u + R^{\delta} \| \rho_{*}(\vec{F})\|_{L^{q}(B_{R})} + R^{\gamma^{\prime} \delta} \| f\|_{L^{\frac{q}{\gamma^{\prime}}}(B_{R})} \ge C R^{-\frac{n}{p}} \|u\|_{L^{p}(B_{\tau R})},
$$
where $\delta = 1 - \frac{n}{q(\gamma-1)}$.
\end{theorem}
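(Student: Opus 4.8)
The plan is to run a Moser iteration in the supersolution direction, reusing almost verbatim those parts of the proof of the first main theorem (the $\sup$-estimate) in which the sign of the test exponent is irrelevant, and adding the two features proper to a weak Harnack inequality: a run of the iteration through \emph{negative} exponents down to the essential infimum, and a logarithmic (John--Nirenberg) bridge across the exponent $0$. First I would normalize away the inhomogeneity by setting
\[
\bar u \defeq u + k, \qquad k \defeq R^{\delta}\bigl\|\rho_{*}(x,\vec F)\bigr\|_{L^{q}(B_{R})} + R^{\gamma^{\prime}\delta}\|f\|_{L^{q/\gamma^{\prime}}(B_{R})},
\]
with each data summand understood raised to the power $\tfrac{1}{\gamma-1}$, as in \eqref{e:supbound} (so that the two summands scale like $u$). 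Since $\mathfrak{D}$ depends on $u$ only through $Du$, $\bar u$ is again a non-negative supersolution of \eqref{e:inhomo}, and $\bar u\ge k>0$, so all negative powers of $\bar u$ used below become legitimate test ingredients after the customary truncation $\bar u\wedge m$ and passage to the limit.

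\textbf{The basic Caccioppoli step.} For each $s\in(-\infty,\gamma-1)\setminus\{0\}$ I would test \eqref{e:inhomo} with the admissible non-negative function $\varphi=\eta^{\gamma}\bar u^{\,s-(\gamma-1)}$, $\eta\in C_{c}^{\infty}(B_{R})$ a cutoff. Euler's identity for the $1$-homogeneous $\rho$, $\Langle(D\rho(x,\cdot))(\xi),\xi\Rangle=\rho(x,\xi)$, converts the diagonal term into $\rho(x,Du)^{\gamma}\ge\nu^{\gamma}|Du|^{\gamma}$ by \eqref{e:eb}; since $s-(\gamma-1)<0$ this good term has the favourable sign and stays on the left. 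Young's inequality on the cutoff term, plus the estimate of the right-hand side of \eqref{e:inhomo} via $q>n/(\gamma-1)$, Hölder, Sobolev and the scaling factors $R^{\delta},R^{\gamma^{\prime}\delta}$ — handled exactly as in the $\sup$-estimate — will absorb the data into $k$ and leave the Caccioppoli inequality $\int\eta^{\gamma}|Dv|^{\gamma}\le C|s-(\gamma-1)|^{-\gamma}\int|D\eta|^{\gamma}v^{\gamma}$ for $v\defeq\bar u^{s/\gamma}$, up to an already-absorbed error. Combined with the Sobolev embedding ($\gamma^{*}=\tfrac{n\gamma}{n-\gamma}$, dilation ratio $\chi\defeq\tfrac{n}{n-\gamma}>1$) and written via $\Phi(t,\varrho)\defeq\bigl(\dashint_{B_{\varrho}}\bar u^{\,t}\bigr)^{1/t}$, this gives, for $r'<r$, the one-step bounds
\[
\Phi(s\chi,r')\le\Bigl(\tfrac{C}{r-r'}\Bigr)^{\gamma/s}\Phi(s,r)\ \ (0<s<\gamma-1),\qquad \Phi(s,r)\le\Bigl(\tfrac{C}{r-r'}\Bigr)^{-\gamma/s}\Phi(s\chi,r')\ \ (s<0);
\]
the factor $|s-(\gamma-1)|^{-1}$ is exactly what forbids crossing $s=\gamma-1$ and so produces the asserted threshold.

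\textbf{Assembling the three ranges.} (i) Iterating the second one-step bound along $s_{j}=s_{0}\chi^{j}\to-\infty$ with radii shrinking from $\tau R$ to $\theta R$ makes the geometric series of cutoff factors and the exponents $\gamma/|s_{j}|\to0$ both converge, giving $\Phi(-p_{0},\tau R)\le C\inf_{B_{\theta R}}\bar u$ for every fixed $p_{0}>0$. (ii) To cross $0$, I would test \eqref{e:inhomo} with $\varphi=\eta^{\gamma}\bar u^{-(\gamma-1)}$ (the excluded value $s=0$): the diagonal term becomes $\int\eta^{\gamma}\rho(x,Du)^{\gamma}\bar u^{-\gamma}\gtrsim\int\eta^{\gamma}|D\log\bar u|^{\gamma}$, so $\int\eta^{\gamma}|D\log\bar u|^{\gamma}\le C\int|D\eta|^{\gamma}+(\text{absorbed data})$, whence $\log\bar u\in\mathrm{BMO}(B_{\tau R})$ with controlled norm; John--Nirenberg then supplies a small $p_{0}=p_{0}(n,\gamma,\nu,\dots)>0$ with $\Phi(p_{0},\tau R)\le C\,\Phi(-p_{0},\tau R)$. (iii) A finite chain of the first one-step bound (each step permissible because the exponent being raised stays below $\gamma-1$, though its image may exceed $\gamma-1$), interleaved with Hölder interpolation, passes from $p_{0}$ to any target $p<(\gamma-1)\chi=\tfrac{n(\gamma-1)}{n-\gamma}$; with an auxiliary radius $\tau<\tau_{1}<1$ this yields $\bigl(\dashint_{B_{\tau R}}\bar u^{\,p}\bigr)^{1/p}\le C\,\Phi(p_{0},\tau_{1}R)$. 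Chaining (iii)$\to$(ii)$\to$(i) gives $\bigl(\dashint_{B_{\tau R}}\bar u^{\,p}\bigr)^{1/p}\le C\inf_{B_{\theta R}}\bar u$; substituting $\bar u=u+k$, using $\inf_{B_{\theta R}}(u+k)=\inf_{B_{\theta R}}u+k$ and $\|u\|_{L^{p}(B_{\tau R})}\le\|u+k\|_{L^{p}(B_{\tau R})}$, and pulling $|B_{\tau R}|^{1/p}\sim R^{n/p}$ out of the average yields the claimed inequality with $C=C(n,\gamma,\nu,\Lambda,q,p,\theta,\tau)$.

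\textbf{Main obstacle.} I expect the bridge (ii) to be the crux: the logarithmic Caccioppoli estimate and the appeal to John--Nirenberg, made uniform in the anisotropy. It all hinges on \eqref{e:eb} giving a lower bound $\rho(x,\xi)^{\gamma}\ge\nu^{\gamma}|\xi|^{\gamma}$ uniform in $x$ and in the direction of $\xi$; granting this, the $\gamma$-homogeneous diagonal term $\rho(x,Du)^{\gamma}$ dominates $|Du|^{\gamma}$ just as in the isotropic $\gamma$-Laplacian, so the anisotropy never actually interferes. Everything else is bookkeeping — justifying the test functions $\eta^{\gamma}\bar u^{\,s-(\gamma-1)}$ for negative $s$ by truncation and a limiting argument, and tracking the $R^{\delta},R^{\gamma^{\prime}\delta}$ scaling of the data so the error terms are honestly absorbed into $k$, exactly as in the proof of the $\sup$-estimate.
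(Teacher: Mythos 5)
Your proposal is correct and follows essentially the same route as the paper's proof of Theorem \ref{t:inf}: a Caccioppoli inequality from the test functions $\eta^{\gamma}\bar u^{\,s-(\gamma-1)}$ (the paper's $\varphi=\eta^{\gamma}v^{-\beta}\bar u$ with $s=\gamma-\beta$), iteration of the resulting reverse-Hölder step downward to the essential infimum and upward finitely many times to any $p<\frac{(\gamma-1)n}{n-\gamma}$, and the logarithmic test function $\eta^{\gamma}\bar u^{-(\gamma-1)}$ plus Poincaré and John--Nirenberg to bridge the exponent $0$. Your bookkeeping of the admissible positive range $0<s<\gamma-1$ is in fact slightly cleaner than the paper's (which states the forward step for ``$0<\beta<\gamma-1$'' where it means $0<\gamma-\beta<\gamma-1$), and correctly identifies the source of the threshold $(\gamma-1)\chi$.
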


In Section \ref{s:theorems} we show that these two main theorems can be combined to achieve many more concrete tools like Moser's weak Harnack inequality (Theorem \ref{t:iwh}). As further consequences when $\vec{F},f \equiv 0$, the usual collection of tools from elliptic theory hold: a strong maximum principle (Theorem \ref{t:mp}), improvement of oscillation and $C^{\alpha}$-regularity of solutions (Theorem \ref{t:ioo} and Corollary \ref{c:calpha}), and a Liouville-type theorem, Theorem \ref{t:liouville}.

Of independent interest is the following Bernstein-type theorem which follows from Theorem \ref{t:liouville} and Example \ref{x:pde2}(ii):

\begin{theorem}
Suppose $\Sigma \defeq \{ (x, u(x)) \in \R^{n+1} : x \in \R^{n} \}$ is stationary with respect to the energy \eqref{e:lpsenergy}. If $u$ is Lipschitz and bounded above or below, then $\Sigma= \R^{n} \times \{u(0)\}$.
\end{theorem}

We note that the requirements that $\rho \in C^{1}$ and $\rho$ be strictly strictly convex are minimal qualitative hypotheses necessary to have any hope of regularity for the Finsler $\gamma$-harmonic functions. Indeed, \eqref{e:thesimplepde} is not well-defined if $\rho$ is not differentiable. On the other hand its well-known that strict convexity is a necessary condition for regularity, see e.g., \cite[Remark 20.4]{maggi2012sets}. In terms of additional hypotheses, considering \eqref{e:inhomo}, the condition that $\{ \rho_{x}\}$ are equi-nondegenerate and equi-bounded is a way of fixing the ``homogeneity'' of the PDE. It is known from mixed-growth problems that if the homogeneity of the PDE vary too much, one loses any hope of a robust regularity theory, see for instance \cite{marcellini1991regularity,mingione2006regularity}. 

To the best of the author's knowledge, the existing literature on the Finsler $\gamma$-Laplacian, is focused on spectrum of the eigenvalues see e.g., \cite{kristaly2022nonlinear} and references therein. There is a small amount of literature dedicated to the regularity of the Finsler Laplacian \cite{ferone2009remarks,li2021extremal,fazly2022partial}. However, due to the non-linearity of the Finsler Laplacian\footnote{So long as $\rho(\xi) \neq \langle A\xi, \xi \rangle^{1/2}$ for some positive definite matrix $A$} without assuming strong non-degeneracy conditions on $\rho_{x}$ (see \cite[Page 180]{fazly2022partial} and the even stronger \cite[(1.2)]{fazly2022partial}) in stark contrast to the $p=2$ case of the $p$-Laplacian, the case where $\gamma = 2$ is not particularly special for the Finsler $\gamma$-Laplacian.

\begin{remark}
We conclude our discussion of the literature about regularity for the Finsler Laplacian by making a few observations about the condition used to prove the mean-value property in \cite[Section 5]{ferone2009remarks}. Therein it is assumed that 
\begin{equation} \label{e:limited}
\langle D\rho(\xi), D\rho_{*}(\zeta) \rangle = \frac{ \langle \xi, \zeta \rangle }{\rho(\xi) \rho_{*}(\zeta)} \qquad \forall \xi,\zeta \in \R^{n} \setminus \{0\},
\end{equation}
where $\rho_{*}$ is the dual norm to $\rho$, see \eqref{e:dualnorm}. This condition is also used throughout \cite{fazly2022partial} and \cite{li2021extremal}, and it has been partially addressed in \cite{cozzi2016monotonicity}. In \cite[Theorem 1.2]{cozzi2016monotonicity} it is shown that the only norms satisfying the condition \eqref{e:limited} are precisely those of the form $\displaystyle \rho(\xi) = \langle A \xi, \xi \rangle^{1/2}$ for some strictly positive definite matrix $M$. In light of this, \cite{cozzi2016monotonicity} uses the weaker formulation \eqref{e:dualmono} and in \cite[Section 7]{cozzi2016monotonicity} they show that in the plane there are explicit conditions one can use to create more norms that satisfy the weaker condition 
\begin{equation} \label{e:dualmono}
\langle D\rho(\xi), D\rho_{*}(\zeta) \rangle \langle \xi,\zeta \rangle \ge 0 \qquad \forall \xi,\zeta \in \R^{n} \setminus \{0\}.
\end{equation}

However, a consequence of the $k=n-1 \ge 2$ case of \cite[Theorem page 437]{allard1974characterization} goes even further to say that if $g$ is any $1$-homogeneous $C^{1}$ function on $\R^{n} \setminus \{0\}$ with $n \ge 3$, then there exists $\xi,\zeta \in \R^{n} \setminus \{0\}$ so that
 \begin{equation*} 
\langle D\rho(\xi), Dg(\zeta) \rangle \langle \xi, \zeta \rangle < 0,
\end{equation*}
unless $\rho(\xi) = |L \xi|$ and $g(\xi) = |L^{t} \xi|$ for some matrix $L$. In particular, when $n \ge 3$ and $\gamma = 2$, the assumption \eqref{e:limited} or \eqref{e:dualmono} implies the Finsler $\gamma$-Laplacian is as in Example \ref{x:pde1}(iv) and simplifies to a constant-coefficient, linear, divergence form PDE.
\end{remark}

Finally, in the preparation of this paper the author learned about the recent work in \cite{benyaiche2020weak}. After discovering that paper we learned that there is also a wealth of literature about minimizers with Orlicz-type growth conditions. See, for instance, \cite{harjulehto2017holder,benyaiche2019harnack,arriagada2018harnack}, and the citations therein. In the special case where $\vec{F},f = 0$ the works of \cite{benyaiche2019harnack,benyaiche2020weak} recover some of the results of this paper. Using Orlicz spaces they are able to replace the assumptions on fixed homogeneity with some notion of upper and lower (almost)-homogeneity. In the isotropic setting, \cite{arriagada2018harnack} shows a Harnack inequality similar to the one in Theorem \ref{t:iwh}. Meanwhile \cite{toivanen2012harnack} proves a general Harnack inequality, while assuming that the terms $\vec{F}, f$ are in $L^{\infty}$.

Also during preparation of this paper \cite{fazly2020partial} proved a Liouville-type theorem when $\gamma = 2$ and $\rho$ has strictly positive Hessian in the sense that 

$$
\nu^{2} |\zeta|^{2} \le (\partial_{\xi_{i}} \partial_{\xi_{j}} \rho)(\xi)\zeta_{i} \zeta_{j} \le \Lambda |\zeta|^{2} \qquad \forall \zeta \in \xi^{\perp}.
$$

\subsection*{Acknowledgments} The author would like to thank Silvia Ghinassi and Tatiana Toro for many useful discussions in preparation of these works, and also Louis Dupaigne for helpful comments on the first draft of this paper.

\section{Notation and Preliminaries}

Throughout we will suppose $\rho: \Omega \times \rn \to [0, \infty)$ is so that 
\begin{equation} \label{e:meas}
\begin{cases}
\rho(x, \cdot ) \in C^{1}(\rn \setminus \{0\}) & \forall x \in \Omega \\ 
\rho(\cdot, \xi) \in L^{\infty}(\Omega)  & \forall \xi \in \rn \setminus \{0\}.
\end{cases}
\end{equation}
and that $\rho$ positively $1$-homogeneous function in its second variable, i.e.,
\begin{equation} \label{e:p1h}
\rho( x, \lambda \xi) = \lambda \rho(x, \xi) \qquad \forall \lambda > 0, \quad  \forall x \in \Omega, \quad \forall \xi \in \rn \setminus \{0\}.
\end{equation}
We further assume that $\rho(x, \cdot)$ is strictly convex in the sense that
\begin{equation} \label{e:cx}
\rho(x, \xi_{1} + \xi_{2}) < \rho(x, \xi_{1}) + \rho(x, \xi_{2}) \quad \forall x \in \Omega, \quad \forall \xi_{1} \neq \lambda \xi_{2} \in \rn \setminus \{0\} .
\end{equation}
Finally, we assume there exists $0 < \nu \le \Lambda < \infty$ independent of $x$ so that
\begin{equation} \label{e:eb}
 \nu \le \rho(x, \xi) \le \Lambda \qquad \forall |\xi| = 1, \qquad \forall x \in \Omega.
\end{equation}

We will let $\rho_{x}$ denote $\rho(x, \cdot)$ to simplify notation. Namely, $(D \rho_{x})(Du) = (D \rho(x, \cdot))(Du)$. We say that $u \in W^{1,\gamma}(\Omega)$ is a subsolution (supersolution) to \eqref{e:inhomo} if

\begin{equation*} 
\int_{\Omega} \Langle \rho_{x}(Du)^{\gamma-1} (D \rho_{x})(Du), D \varphi \Rangle \le (\ge) \int_{ \Omega} \Langle \vec{F}, D \varphi \Rangle + f \varphi 
\end{equation*}
for all non-negative $\varphi \in W^{1,\gamma^{\prime}}_{0}(\Omega)$. We say that $u$ is a solution if it is both a subsolution and supersolution.

Given a real number, say $\alpha$, in $(1, \infty)$ or $[1,n)$, we will respectively always let $\alpha^{\prime}$ and $\alpha^{*}$ denote the Holder and Sobolev exponents. That is,
$$
\frac{1}{\alpha} + \frac{1}{\alpha^{\prime}} = 1 \qquad \alpha^{*} = \frac{n \alpha}{n-\alpha}.
$$

\begin{theorem}[Gagliado-Nirenberg-Sobolev]
If $u \in W^{1,\gamma}_{0}(\Omega)$ then there exists $C = C(n,\gamma) > 0$ so that
$$
\| u\|_{L^{\gamma^{*}(\Omega)}} \le C \|Du\|_{L^{\gamma}(\Omega)} 
$$
\end{theorem}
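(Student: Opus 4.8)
The plan is the classical two-step argument. First I would reduce to the model case: extending $u$ by zero makes it an element of $W^{1,\gamma}(\rn)$ with the same norms, and since $C^{\infty}_{c}(\rn)$ is dense in $W^{1,\gamma}_{0}(\rn)$ it suffices to prove the estimate for $u \in C^{\infty}_{c}(\rn)$, the general case following by applying it to an approximating sequence and passing to the limit (here $\gamma^{*} < \infty$ because $\gamma < n$, and Fatou guarantees the limit lies in $L^{\gamma^{*}}$ with the claimed bound).

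Second, I would establish the endpoint case ``$\gamma = 1$'': for $u \in C^{\infty}_{c}(\rn)$,
\[
\| u \|_{L^{\frac{n}{n-1}}(\rn)} \le \prod_{i=1}^{n} \left\| \partial_{i} u \right\|_{L^{1}(\rn)}^{1/n} \le \| Du \|_{L^{1}(\rn)}.
\]
This is proved by writing, for each $i$, $|u(x)| \le \int_{\R} |\partial_{i} u(x_{1},\dots,s,\dots,x_{n})| \, \dif s$ by the fundamental theorem of calculus, multiplying these $n$ bounds, taking the $n$-th root, and then integrating successively in $x_{1}, x_{2}, \dots, x_{n}$; at each stage one pulls out the single factor that no longer depends on the variable being integrated and applies the generalized H\"older inequality with $n-1$ exponents all equal to $n-1$. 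The final inequality is just AM--GM together with $\|\partial_i u\|_{L^1} \le \|Du\|_{L^1}$.

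Third, for general $1 < \gamma < n$ I would apply the endpoint case to $v = |u|^{t}$ with $t = \frac{\gamma(n-1)}{n-\gamma} > 1$. This value of $t$ is forced by the requirement $\frac{tn}{n-1} = \gamma^{*}$, and one checks it also satisfies $(t-1)\gamma' = \gamma^{*}$. Since $v \in W^{1,1}$ with $Dv = t |u|^{t-1} (\sgn u) Du$ a.e., the endpoint inequality and H\"older (with exponents $\gamma'$ and $\gamma$) give
\[
\Big( \intrn |u|^{\gamma^{*}} \Big)^{\frac{n-1}{n}} = \| v \|_{L^{\frac{n}{n-1}}} \le \| Dv\|_{L^{1}} = t \intrn |u|^{t-1} |Du| \le t \Big( \intrn |u|^{\gamma^{*}} \Big)^{1/\gamma'} \| Du\|_{L^{\gamma}}.
\]
Dividing by the finite quantity $\big( \intrn |u|^{\gamma^{*}} \big)^{1/\gamma'}$ and using $\frac{n-1}{n} - \frac{1}{\gamma'} = \frac{1}{\gamma^{*}}$ yields the claim with $C = C(n,\gamma)$ (indeed $C = t$).

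The only genuinely delicate point is the iterated integration in the endpoint case: one must organize the successive H\"older applications so that at each step exactly one factor is integrated out while the remaining $n-1$ are controlled in $L^{n-1}$ of the surviving variables. Everything else --- the density reduction, the chain rule for $v=|u|^t$, and the final algebra of exponents --- is routine, and the division step is harmless because $\intrn |u|^{\gamma^{*}}$ is finite for $u \in C^{\infty}_{c}$ (if it vanishes there is nothing to prove).
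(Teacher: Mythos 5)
Your proposal is correct: it is the classical Gagliardo--Nirenberg argument (extension by zero and density, the $L^{1}\to L^{n/(n-1)}$ endpoint via iterated H\"older, then the substitution $v=|u|^{t}$ with $t=\frac{\gamma(n-1)}{n-\gamma}$), and all the exponent arithmetic checks out, including $(t-1)\gamma'=\gamma^{*}$ and $\frac{n-1}{n}-\frac{1}{\gamma'}=\frac{1}{\gamma^{*}}$. The paper states this theorem without proof as a standard preliminary, so there is nothing to compare against; your argument is the standard textbook proof and is sound.
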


\begin{theorem}[Poincare in a ball]
For each $1 \le \gamma < n$ there exists a $C = C(n,\gamma)$ so that
$$
\left( r^{-n} \int_{B(x,r)} |f-(f)_{x,r}|^{\gamma^{*}} dy \right)^{\frac{1}{\gamma^{*}}} \le C_{2} r^{1- \frac{n}{\gamma}} \left( \int_{B(x,r)} |Df|^{\gamma} dy \right)^{\frac{1}{\gamma}}
$$
\end{theorem}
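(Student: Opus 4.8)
The plan is to first establish the inequality on the unit ball $B(0,1)$ --- that is, the Sobolev--Poincar\'e estimate
$$
\|f - (f)_{0,1}\|_{L^{\gamma^{*}}(B(0,1))} \le C(n,\gamma)\,\|Df\|_{L^{\gamma}(B(0,1))}
$$
--- and then to deduce the stated inequality on $B(x,r)$ by translation and scaling.

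For the unit ball I would begin from the classical pointwise potential bound: for $f \in W^{1,\gamma}(B(0,1))$ and a.e. $y \in B(0,1)$,
$$
|f(y) - (f)_{0,1}| \le c_{n} \int_{B(0,1)} \frac{|Df(z)|}{|y-z|^{n-1}}\,dz.
$$
This follows by writing $f(y) - f(z) = -\int_{0}^{|y-z|} \partial_{t}\big(f(y + t\omega)\big)\,dt$ with $\omega = (z-y)/|z-y|$, averaging over $z \in B(0,1)$, using convexity of the ball to keep each segment inside it, and passing from smooth functions to $W^{1,\gamma}$ by density. The right-hand side is $c_{n}$ times the Riesz potential $I_{1}$ of $|Df|$, extended by zero outside $B(0,1)$. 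The Hardy--Littlewood--Sobolev theorem gives $\|I_{1} g\|_{L^{\gamma^{*}}(\rn)} \le C(n,\gamma)\|g\|_{L^{\gamma}(\rn)}$ for $1 < \gamma < n$, and at the endpoint $\gamma = 1$ one has only the weak-type bound for $I_{1}$, but since the integration takes place over the finite-measure set $B(0,1)$ this still upgrades to the strong $L^{1^{*}}$ estimate with a dimensional constant; combining these yields the unit-ball inequality. (Alternatively, one could extend $f - (f)_{0,1}$ to a compactly supported function in $W^{1,\gamma}(\rn)$, apply the Gagliardo--Nirenberg--Sobolev theorem stated above, and then absorb the resulting $L^{\gamma}$ term using the ordinary Poincar\'e inequality on a ball; the potential route is cleaner since it requires no extension operator and produces purely dimensional geometric constants.)

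To conclude, fix $x$ and $r > 0$ and set $g(y) = f(x + ry)$ for $y \in B(0,1)$, so that $g \in W^{1,\gamma}(B(0,1))$ with $Dg(y) = r\,Df(x+ry)$ and $(g)_{0,1} = (f)_{x,r}$. Applying the unit-ball estimate to $g$ and changing variables by $w = x + ry$, $dy = r^{-n}\,dw$, turns the left-hand side into $\big(r^{-n}\int_{B(x,r)}|f - (f)_{x,r}|^{\gamma^{*}}\,dw\big)^{1/\gamma^{*}}$ and the right-hand side into
$$
C(n,\gamma)\Big(\int_{B(0,1)} r^{\gamma}|Df(x+ry)|^{\gamma}\,dy\Big)^{1/\gamma} = C(n,\gamma)\,r^{\,1-\frac{n}{\gamma}}\Big(\int_{B(x,r)}|Df|^{\gamma}\,dw\Big)^{1/\gamma},
$$
which is precisely the asserted inequality with $C_{2} = C(n,\gamma)$. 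The only step carrying real content is the unit-ball Sobolev--Poincar\'e inequality, which is classical; the rest is bookkeeping of the homogeneity exponent $1 - \tfrac{n}{\gamma}$, the sole minor subtlety being the weak-type endpoint at $\gamma = 1$ noted above.
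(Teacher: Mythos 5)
The paper states this as a recalled classical fact in its preliminaries and offers no proof, so there is nothing internal to compare against. Your argument — the pointwise bound of $|f(y)-(f)_{0,1}|$ by the Riesz potential $I_{1}|Df|$ on the convex set $B(0,1)$, Hardy--Littlewood--Sobolev, and then the rescaling $g(y)=f(x+ry)$ — is the standard route, and for $1<\gamma<n$ it is correct; the bookkeeping producing the factor $r^{1-\frac{n}{\gamma}}$ is also right.

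The one genuine flaw is your treatment of the endpoint $\gamma=1$, which the statement formally includes. It is false that a weak-type $(1,1^{*})$ bound ``upgrades to the strong $L^{1^{*}}$ estimate'' merely because $B(0,1)$ has finite measure: weak $L^{1^{*}}(B)$ embeds into $L^{q}(B)$ only for $q<1^{*}$, and for instance $y\mapsto |y|^{-(n-1)}$ lies in weak $L^{n/(n-1)}(B(0,1))$ but not in $L^{n/(n-1)}(B(0,1))$. Obtaining the strong endpoint inequality requires a genuine extra idea — Maz'ya's truncation argument applied to the weak-type estimate, or the extension-plus-Gagliardo--Nirenberg--Sobolev route you mention only parenthetically (the $p=1$ GNS inequality for compactly supported functions does hold, but it is proved by the iterated-integral argument, not by Riesz potentials). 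Since the paper only ever applies this theorem with $1<\gamma<n$, the gap is harmless for its purposes, but as a proof of the theorem as stated the case $\gamma=1$ is not established by the route you designate as primary.
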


\begin{remark} \label{r:sobolev}
Since $\rho$ will always satisfies \eqref{e:meas}, \eqref{e:p1h}, and \eqref{e:cx}, if it also solves the first inequality in \eqref{e:eb} then the Sobolev embedding theorem can be re-written as 
$$
\|u\|_{L^{\gamma^{*}}(\Omega)} \le C \nu^{-1} \| \rho_{x}(Du)\|_{L^{\gamma}(\Omega)}.
$$
Similarly Poincare in a ball can be re-written as
$$
\left( r^{-n} \int |f-(f)_{x,r}|^{\gamma^{*}} dy \right)^{\frac{1}{\gamma^{*}}} \le C_{2} \nu^{-1} r^{1- \frac{n}{\gamma}} \left( \int_{B(x,r)} \rho_{x}(Df)^{\gamma} dy \right)^{\frac{1}{\gamma}}.
$$
\end{remark}

Given $\rho : \Omega \times \rn \setminus \{0\} \to (0,\infty)$ as in \eqref{e:meas}, \eqref{e:p1h}, \eqref{e:cx}, define $\rho_{*} : \Omega \times \rn \setminus \{0\} \to (0, \infty)$ by
\begin{equation} \label{e:dualnorm}
\rho_{*}(x, \xi^{*}) = \sup_{\rho(x,\xi) < 1} \xi \cdot \xi^{*}.
\end{equation}
That is, $\rho_{*}(x, \cdot)$ is the convex dual of $\rho(x, \cdot)$ for all $x \in \Omega$. We record for the reader's convenience a few facts that are frequently used:
\begin{proposition} \label{p:dualprops}
Let $a,b,c, \epsilon > 0$, $\alpha \in (1, \infty)$, $\rho$ as in \eqref{e:meas}, \eqref{e:p1h}, and \eqref{e:cx}. Suppose $\xi_{1}, \xi_{2} \in \rn \setminus \{0\}$. 
\begin{itemize}
\item Young's inequality says
$$
abc \le a \epsilon^{\alpha} \frac{b^{\alpha}}{\alpha} + a \epsilon^{-\alpha^{\prime}} \frac{c^{\alpha^{\prime}}}{\alpha^{\prime}}.
$$
\item Fenchel's inequality guarantees
$$
\xi_{1} \cdot \xi_{2} \le \rho(\xi_{1}) \rho_{*}(\xi_{2})
$$
\item It holds,
\begin{equation} \label{e:eq1}
\rho_{*}(x,D \rho_{x}(\xi_{1})) \equiv 1.
\end{equation}
\item The dual norm $\rho_{*}$ also satisfies \eqref{e:meas}, \eqref{e:p1h}, \eqref{e:cx}. Moreover,
$$
(D \rho) \circ (D \rho_{*}) (\xi^{*}) = \frac{\xi^{*}}{\rho_{*}(\xi^{*})} \quad \text{and} \quad (D \rho_{*}) \circ (D \rho)(\xi) = \frac{\xi}{\rho(\xi)} \qquad \forall \xi, \xi^{*} \in \rn \setminus \{0\}.
$$

\item If $\rho$ satisfies \eqref{e:eb} then for all $|\xi| = 1$,
$$
\Lambda^{-1} \le \rho_{*}(\xi) \le \nu^{-1}.
$$
In particular, $\vec{F} \in L^{q}(\Omega)$ if and only if $\rho_{*}(\cdot,\vec{F}) \in L^{q}(\Omega)$
\end{itemize}
\end{proposition}

We prove the following Cacciopolli type inequality to show the usefulness of the dual norm and corresponding estimates in Proposition \ref{p:dualprops}.

\begin{lemma}
If $u$ is a subsolution to \eqref{e:inhomo} with $\vec{F},f \equiv 0$ and $1 < \gamma < \infty$, then
$$
\| \eta \rho_{x}(D u) \|_{L^{\gamma}(\Omega)} \le C(n,\gamma) \| u \rho_{x}(D \eta)\|_{L^{\gamma}(\Omega)}.
$$
\end{lemma}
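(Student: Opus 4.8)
The plan is the standard Caccioppoli argument adapted to the $\gamma$-homogeneous, anisotropic setting: test the subsolution inequality against $\varphi = \eta^{\gamma} u$ (or a small variant of this, e.g.\ $\eta^{\gamma'} u$ depending on what homogeneity bookkeeping makes the exponents match) and then use the $1$-homogeneity of $\rho_x$ together with Fenchel's inequality to absorb the bad term. First I would check that $\varphi = \eta^{\gamma} u \in W^{1,\gamma'}_0(\Omega)$ and is nonnegative, so it is an admissible test function; here $\eta$ is a cutoff and $u$ is the (nonnegative, by the hypotheses inherited from context) subsolution, and one computes $D\varphi = \eta^{\gamma} Du + \gamma \eta^{\gamma-1} u \, D\eta$. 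Plugging this in and using $\vec F, f \equiv 0$ gives
\begin{equation*}
\int_{\Omega} \eta^{\gamma} \Langle \rho_x(Du)^{\gamma-1}(D\rho_x)(Du), Du \Rangle \le - \gamma \int_{\Omega} \eta^{\gamma-1} u \Langle \rho_x(Du)^{\gamma-1}(D\rho_x)(Du), D\eta \Rangle.
\end{equation*}

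The key algebraic observation is that, by Euler's identity for the $1$-homogeneous function $\rho_x$, one has $\Langle (D\rho_x)(Du), Du \Rangle = \rho_x(Du)$, so the left-hand integrand is exactly $\eta^{\gamma}\rho_x(Du)^{\gamma}$. For the right-hand side, I would bound $\Langle (D\rho_x)(Du), D\eta\Rangle$ using Fenchel's inequality \eqref{e:eq1}, namely $(D\rho_x)(Du)\cdot D\eta \le \rho_*(x,(D\rho_x)(Du))\,\rho_x(D\eta) = \rho_x(D\eta)$, since $\rho_*(x,D\rho(\xi))\equiv 1$. This yields
\begin{equation*}
\int_{\Omega} \eta^{\gamma}\rho_x(Du)^{\gamma} \le \gamma \int_{\Omega} \eta^{\gamma-1}\rho_x(Du)^{\gamma-1}\, u\, \rho_x(D\eta),
\end{equation*}
at which point Young's inequality (the version recorded in the Proposition, with exponents $\gamma$ and $\gamma'$ applied to $\eta^{\gamma-1}\rho_x(Du)^{\gamma-1}$ and $u\,\rho_x(D\eta)$) splits the right side into $\epsilon \int \eta^{\gamma}\rho_x(Du)^{\gamma}$ plus $C(\gamma,\epsilon)\int u^{\gamma}\rho_x(D\eta)^{\gamma}$. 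Choosing $\epsilon$ small enough to absorb the first term into the left gives $\|\eta\rho_x(Du)\|_{L^\gamma}^\gamma \le C(n,\gamma)\|u\,\rho_x(D\eta)\|_{L^\gamma}^\gamma$, which is the claim after taking $\gamma$-th roots.

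The step I expect to require the most care is not any single inequality but the justification that $\eta^\gamma u$ is a legitimate test function in $W^{1,\gamma'}_0$ together with the finiteness of all the integrals appearing — strictly speaking one should first run the argument with a truncation $u_k = \min(u,k)$ or with $\eta^\gamma u$ replaced by a mollified/truncated version, derive a bound uniform in $k$, and pass to the limit by monotone convergence; the homogeneity makes the exponents on $\rho_x(Du)$ work out cleanly ($\gamma-1$ from the PDE plus $1$ from Euler's identity equals $\gamma$), so the only genuine subtlety is this approximation bookkeeping and verifying $D\varphi \in L^{\gamma'}$ given only $u\in W^{1,\gamma}$ and $\eta$ Lipschitz with compact support. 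I would also note that one never uses strict convexity \eqref{e:cx} here — only $1$-homogeneity, $C^1$-regularity, and the two-sided bound \eqref{e:eb} enter — so the lemma holds under the weaker structural hypotheses, which is worth remarking.
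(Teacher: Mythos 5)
Your proof is correct and follows essentially the same route as the paper: test with $\varphi=\eta^{\gamma}u$, use Euler's identity for the $1$-homogeneous $\rho_x$ together with Fenchel's inequality and $\rho_*(x,D\rho_x(\xi))\equiv 1$ to reduce to $\int\eta^{\gamma}\rho_x(Du)^{\gamma}\le \gamma\int(\eta\rho_x(Du))^{\gamma-1}u\,\rho_x(D\eta)$. The only cosmetic difference is that the paper closes with H\"older's inequality and divides, whereas you close with Young's inequality and absorption; these are equivalent.
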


\begin{proof}
Consider $\varphi = \eta^{\gamma} u$. Then $D \varphi = \gamma \eta^{\gamma-1} u D \eta+ \eta^{\gamma} Du$. So, using the $1$-homogeneity of $\rho$ and Fenchel's inequality, 
\begin{align*}
& \Langle \rho_{x}(Du)^{\gamma-1} (D \rho_{x})(Du) , D \varphi \Rangle \\ & \ge- \gamma (\eta \rho_{x}(Du))^{\gamma-1} \rho_{*}(x,(D \rho_{x})(Du)) u \rho_{x}(D \eta) + \eta^{\gamma} \rho_{x}(Du)^{\gamma}.
\end{align*}
Using $u$ is a subsolution with $\vec{F},f \equiv 0$, and \eqref{e:eq1} yields
\begin{align*}
\int_{\Omega} \eta^{\gamma} \rho_{x}(Du)^{\gamma} & \le \gamma \int_{\Omega} ( \eta \rho_{x}(Du))^{\gamma-1} u \rho_{x}(D \eta) \\
& \le \left( \int ( \eta \rho(Du))^{\gamma} \right)^{1 - \frac{1}{\gamma}} \left( \int_{\Omega} u^{\gamma} \rho(D \eta)^{\gamma} \right)^{\frac{1}{\gamma}}.
\end{align*}
Dividing completes the proof.
\end{proof}

Another simple consequence of the estimates in Proposition \ref{p:dualprops}, which should be compared with \cite[Section 2]{ferone2009remarks} is

\begin{remark}[Fundamental Solutions] \label{r:fs}
If $\rho$ is a $C^{1}$ and strictly convex norm, then up to a multiplicative constant,
$$
\begin{cases}
\rho_{*}(x)^{\frac{\gamma-n}{\gamma-1}} & n \neq \gamma > 1 \\
\ln ( \rho_{*}(x)) & n = \gamma >1,
\end{cases}
$$
is the fundamental solution to the Finsler $\gamma$-Laplacian.
\end{remark}

We quickly verify Remark \ref{r:fs} in the case $n \neq \gamma$: 

If $1 < \gamma \neq n$, $F(x) = \frac{\gamma-1}{\gamma-n}\rho_{*}(x)^{\frac{\gamma-n}{\gamma-1}}$ then $\divr \left( \rho(DF)^{\gamma-1}(D \rho)(DF) \right) = 0$. This generalizes the so-called fundamental solutions for the $p$-Laplacian.
Indeed, $DF(x) = \rho_{*}(x)^{\frac{1-n}{\gamma-1}} (D \rho_{*}(x))$ so
$$
\begin{cases}
\rho(DF(x))^{\gamma-1} = \rho_{*}(x)^{1-n} \\
(D \rho)(DF(x)) = \frac{x}{\rho_{*}(x)}.
\end{cases}
$$
Hence, $\rho(DF)^{\gamma-1}(D \rho)(Df) = x \rho_{*}(x)^{-n}$ which has divergence zero in $\R^{n}$ for any $1$-homogeneous, $C^{1}$ function $\rho_{*}$.

Finally, we recall a technical lemma for later use, see e.g., \cite[Lemma 4.19]{han2011elliptic}
\begin{lemma} \label{l:ibs}
Let $\omega, \sigma$ be non-decreasing functions in $(0,R]$. Suppose there exists $0 < \tau, \tilde \delta < 1$ so that for all $r \le R$,
$$
\omega( \tau r) \le \tilde \delta \omega(r) + \sigma(r).
$$
Then for any $\mu \in (0,1)$ and $r \le R$
$$
\omega(r) \le C \left\{ \left( \frac{r}{R} \right)^{\alpha} \omega(R) + \sigma(r^{\mu} R^{1-\mu}) \right\}
$$
where $C = C(\tilde \delta, \tau)$ and $\alpha = \alpha( \tilde \delta, \tau, \mu)$.
\end{lemma}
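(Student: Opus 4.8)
Proof proposal for Lemma \ref{l:ibs} (the iteration lemma)

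This is a standard iteration lemma, and the plan is to iterate the hypothesis along the geometric sequence of radii $\tau^k r$. The key observation is that one should not iterate the inequality naively — that would only give control at scales $\tau^k R$, whereas we want control at an arbitrary $r \le R$. Instead, I would fix $r \le R$, and for each nonnegative integer $k$ apply the hypothesis at radius $\tau^k r$ (this requires $\tau^k r \le R$, which holds since $r \le R$ and $\tau < 1$), obtaining $\omega(\tau^{k+1} r) \le \tilde\delta\,\omega(\tau^k r) + \sigma(\tau^k r)$. Iterating $N$ times and using monotonicity of $\sigma$ (so $\sigma(\tau^k r) \le \sigma(r)$... wait — that is too lossy; the point of the $\mu$ in the conclusion is precisely to gain from the smallness of $\sigma$ at small scales, so I need to be more careful).

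The sharper approach: fix $r \le R$ and let $N$ be the largest integer with $\tau^N R \ge r$, i.e. $N \approx \log(R/r)/\log(1/\tau)$. Now iterate the hypothesis \emph{starting from $R$}, applying it at radii $R, \tau R, \tau^2 R, \dots$. After $j$ steps,
$$
\omega(\tau^j R) \le \tilde\delta^j \omega(R) + \sum_{i=0}^{j-1} \tilde\delta^{j-1-i}\sigma(\tau^i R).
$$
Take $j = N$. Then $\tau^N R \le r/\tau$, and by monotonicity $\omega(r) \le \omega(\tau^{N-1}R)$ or one adjusts the index by a harmless constant; the first term becomes $\tilde\delta^N\omega(R)$, and since $\tilde\delta^N = \big(\tau^N\big)^{\alpha}$ with $\alpha = \log(1/\tilde\delta)/\log(1/\tau) > 0$, and $\tau^N \approx r/R$, this gives $\tilde\delta^N\omega(R) \le C (r/R)^\alpha \omega(R)$, which is the first term of the conclusion. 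For the sum, split the index range at $i_0 = \lfloor \mu N\rfloor$: for $i \le i_0$ bound $\sigma(\tau^i R) \le \sigma(\tau^{i_0}R)$ (monotonicity) and sum the geometric tail $\sum \tilde\delta^{N-1-i}$, which is bounded by $C\tilde\delta^{N - i_0}$; for $i > i_0$ bound $\sigma(\tau^i R) \le \sigma(\tau^{i_0} R)$ as well and sum $\sum_{i>i_0}\tilde\delta^{N-1-i} \le C$. In both cases one gets $C\,\sigma(\tau^{i_0}R)$. Finally $\tau^{i_0} R \approx \tau^{\mu N} R = (\tau^N)^\mu R \approx (r/R)^\mu R = r^\mu R^{1-\mu}$ up to a multiplicative constant absorbed by monotonicity of $\sigma$ (choosing $\mu$ slightly adjusted or enlarging the argument harmlessly, since $\sigma$ is non-decreasing we can replace $c\, r^\mu R^{1-\mu}$ by $r^\mu R^{1-\mu}$ only if $c\le 1$; if $c>1$ absorb by picking the constant in front or shrink $\mu$ — a routine technicality). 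This yields $\omega(r) \le C\{(r/R)^\alpha\omega(R) + \sigma(r^\mu R^{1-\mu})\}$.

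The only genuinely delicate point is the bookkeeping in the split of the geometric sum so that the exponent $\mu$ of the conclusion matches, and ensuring the argument of $\sigma$ in the final bound is exactly $r^\mu R^{1-\mu}$ rather than some constant multiple — this is handled by the monotonicity of $\sigma$ together with choosing the split index $i_0$ correctly (one may need to replace $\mu$ by a slightly different exponent throughout and note $\alpha$ depends on $\mu$ anyway, as the statement allows). Everything else — the geometric series estimates, the identification $\tilde\delta^N = (\tau^N)^\alpha$ — is elementary. I expect no real obstacle; the main care is simply to carry the constants through without circular dependence (note $C$ depends only on $\tilde\delta,\tau$ while $\alpha$ additionally on $\mu$, consistent with the statement).
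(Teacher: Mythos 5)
The paper does not actually prove this lemma---it is recalled verbatim from the literature (it is Gilbarg--Trudinger, Lemma 8.23)---so I compare your proposal against the standard argument. Your proposal has a genuine gap in the treatment of the $\sigma$-sum. After iterating from $R$ you must control $\sum_{i=0}^{N-1}\tilde\delta^{\,N-1-i}\sigma(\tau^{i}R)$, and on the range $i\le i_{0}$ you invoke ``monotonicity'' to claim $\sigma(\tau^{i}R)\le\sigma(\tau^{i_{0}}R)$. This is backwards: for $i\le i_{0}$ the radius $\tau^{i}R$ is \emph{larger} than $\tau^{i_{0}}R$, so the nondecreasing $\sigma$ satisfies $\sigma(\tau^{i}R)\ge\sigma(\tau^{i_{0}}R)$. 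The only available bound on that range is $\sigma(\tau^{i}R)\le\sigma(R)$, and the geometric factor then leaves an extra term comparable to $\tilde\delta^{\,N-i_{0}}\sigma(R)$, i.e.\ roughly $(r/R)^{(1-\mu)\alpha_{0}}\sigma(R)$ with $\alpha_{0}=\log\tilde\delta/\log\tau$. That term is not controlled by the right-hand side of the conclusion: take $\omega\equiv 0$ and $\sigma$ supported only near $R$ (say $\sigma=M$ on $(R/2,R]$ and $\sigma=0$ below), which satisfies the hypothesis for every $M$; then $\omega(R)=0$ and $\sigma(r^{\mu}R^{1-\mu})=0$ for small $r$, while your bound retains $M$. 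So iterating all the way from $R$ down to $r$ cannot yield the stated conclusion.

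The standard repair---and the entire point of the parameter $\mu$---is to fix the intermediate radius $R_{1}=r^{\mu}R^{1-\mu}$ and iterate only from $R_{1}$ downward, so that every $\sigma$-evaluation occurs at a radius at most $R_{1}$ and is crudely bounded by $\sigma(R_{1})=\sigma(r^{\mu}R^{1-\mu})$ (this also disposes of your worry about constant multiples inside the argument of $\sigma$). One obtains $\omega(\tau^{m}R_{1})\le\tilde\delta^{\,m}\omega(R_{1})+\sigma(R_{1})/(1-\tilde\delta)$, then uses monotonicity of $\omega$ (not of $\sigma$) to write $\omega(R_{1})\le\omega(R)$, and chooses $m$ with $\tau^{m}R_{1}<r\le\tau^{m-1}R_{1}$, giving $\tilde\delta^{\,m-1}\le\tilde\delta^{-1}(r/R_{1})^{\alpha_{0}}=\tilde\delta^{-1}(r/R)^{(1-\mu)\alpha_{0}}$; this is exactly where the $\mu$-dependence of $\alpha$ in the statement comes from. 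Your handling of the $\omega(R)$-term and of the range $i>i_{0}$ is fine; the missing idea is to start the iteration at $R_{1}$ rather than at $R$.
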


\section{Main results} \label{s:theorems}

In this section we focus on functions $u$ that solve
\begin{equation} \label{e:dt1}
\int_{\Omega} \Langle \rho_{x}(Du)^{\gamma-1} (D \rho_{x})(Du) , D \varphi \Rangle dx = \int_{\Omega} \Langle \vec{F}, D \varphi \Rangle + f \varphi  \qquad \forall \varphi \in W^{1, \gamma^{\prime}}_{0}(\Omega).
\end{equation}
As a corollary of these results, we can answer further questions about functions $u$ that solve 
\begin{equation} \label{e:homo}
\int_{\Omega} \Langle \rho_{x}(Du)^{\gamma-1} (D \rho_{x})(Du) , D \varphi \Rangle dx = 0.
\end{equation}

We begin with a Caccioppoli inequality when $\vec{F},f \not\equiv 0$.

\begin{theorem} \label{t:crhs}
Suppose $u \in W^{1, \gamma}(\Omega)$ is a subsolution of \eqref{e:dt1} with $1 < \gamma < \infty$ and $\rho : \Omega \times \rn \setminus \{0\}$ satisfies \eqref{e:meas}, \eqref{e:p1h}, \eqref{e:cx}, and \eqref{e:eb}. Assume $\vec{F},f \in L^{\tilde \gamma}(\Omega)$ where $\tilde \gamma = \max \{\gamma, \gamma^{\prime} \}$. If $B_{2R} \subset \Omega$ and $0 < R \le 10$ then,
\begin{equation} \label{e:crhs}
\| \rho_{x}(Du)\|_{L^{\gamma}(B_{R})} \le c_{\gamma,\Lambda} \left[ R^{-1} \|u\|_{L^{\gamma}}(B_{2R}) + \| \rho_{*}(x,\vec{F}) \|_{L^{\gamma^{\prime}}(B_{2R})}^{\frac{1}{\gamma-1}} + R^{\frac{1}{\gamma-1}} \| f \|_{L^{\gamma^{\prime}}(B_{2R})}^{\frac{1}{\gamma-1}} \right].
\end{equation}
\end{theorem}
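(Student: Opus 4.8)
The plan is a Caccioppoli-type energy estimate: test the subsolution inequality against $\varphi = \eta^{\gamma}u$ for a standard cutoff $\eta$, use Euler's identity for the $1$-homogeneous function $\rho_{x}$ to extract the good term $\int\eta^{\gamma}\rho_{x}(Du)^{\gamma}$, control every remaining term by Fenchel's inequality together with \eqref{e:eq1} and the pointwise bound $\rho_{x}(\zeta)\le\Lambda|\zeta|$ coming from \eqref{e:p1h}--\eqref{e:eb}, and then use Young's inequality to redistribute the homogeneities so that a small multiple of $\int\eta^{\gamma}\rho_{x}(Du)^{\gamma}$ can be absorbed back onto the left.

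Concretely, I would fix $\eta\in C^{\infty}_{c}(B_{2R})$ with $\eta\equiv 1$ on $B_{R}$, $0\le\eta\le 1$, $|D\eta|\le c/R$, and take $\varphi=\eta^{\gamma}u$ (if $u$ is not assumed non-negative, use $\eta^{\gamma}u^{+}$, which only restricts the set on which $Du$ is controlled and not the structure of the estimate); here one records that $\varphi$ is an admissible test function, which is where the hypothesis $\vec{F},f\in L^{\tilde\gamma}$, $\tilde\gamma=\max\{\gamma,\gamma'\}$, and a density argument enter. Since $D\varphi=\gamma\eta^{\gamma-1}uD\eta+\eta^{\gamma}Du$ and $\langle(D\rho_{x})(Du),Du\rangle=\rho_{x}(Du)$, the subsolution inequality rearranges, after applying Fenchel to the $\vec{F}$-pairing, to
\[
\int \eta^{\gamma}\rho_{x}(Du)^{\gamma}\le \gamma\int (\eta\rho_{x}(Du))^{\gamma-1}|u|\,\big|\langle (D\rho_{x})(Du),D\eta\rangle\big|+\int \rho_{*}(x,\vec{F})\,\rho_{x}(D\varphi)+\int |f|\,\eta^{\gamma}|u|.
\]
The geometric factors are then bounded: $\big|\langle (D\rho_{x})(Du),D\eta\rangle\big|\le\Lambda|D\eta|\le c\Lambda/R$ by \eqref{e:eq1} and $\rho_{x}(\pm D\eta)\le\Lambda|D\eta|$, while sub-additivity of $\rho_{x}$ (from \eqref{e:cx}) and $\rho_{x}(\zeta)\le\Lambda|\zeta|$ give $\rho_{x}(D\varphi)\le\eta^{\gamma}\rho_{x}(Du)+c\gamma\Lambda R^{-1}\eta^{\gamma-1}|u|$.

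After substitution, the right-hand side is a sum of four integrals, each a product of a power of $\eta\rho_{x}(Du)$ with a power of one of $R^{-1}|u|$, $\rho_{*}(x,\vec{F})$, $R|f|$. To each I apply Young's inequality in the form stated in the Proposition, with the conjugate pair $(\gamma,\gamma')$ arranged so that the factor carrying $\eta\rho_{x}(Du)$ appears to the exact power $\gamma$: for instance $(\eta\rho_{x}(Du))^{\gamma-1}\cdot(\varepsilon^{-1}R^{-1}|u|)$ splits, via $(\gamma-1)\gamma'=\gamma$, into $\varepsilon^{\gamma'}(\eta\rho_{x}(Du))^{\gamma}$ and $\varepsilon^{-\gamma}R^{-\gamma}|u|^{\gamma}$; $[\eta\rho_{x}(Du)]\cdot[\varepsilon\,\eta^{\gamma-1}\rho_{*}(x,\vec{F})]$ splits into $\varepsilon^{\gamma}\eta^{\gamma}\rho_{x}(Du)^{\gamma}$ and $\varepsilon^{-\gamma'}\eta^{\gamma}\rho_{*}(x,\vec{F})^{\gamma'}$; and $|f|\eta^{\gamma}|u|=(R^{-1}\eta|u|)(R\eta^{\gamma-1}|f|)$ splits into $R^{-\gamma}\eta^{\gamma}|u|^{\gamma}$ and $R^{\gamma'}\eta^{\gamma}|f|^{\gamma'}$. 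Choosing $\varepsilon=\varepsilon(\gamma,\Lambda)$ so small that the combined coefficient of $\int\eta^{\gamma}\rho_{x}(Du)^{\gamma}$ on the right is at most $\tfrac12$, and noting this integral is finite (since $\rho_{x}(Du)\le\Lambda|Du|\in L^{\gamma}$ and $\eta$ has compact support, so the absorption is legitimate), I arrive, after using $\eta\equiv1$ on $B_{R}$ and $\spt\eta\subset B_{2R}$, at
\[
\int_{B_{R}}\rho_{x}(Du)^{\gamma}\le c_{\gamma,\Lambda}\Big(R^{-\gamma}\int_{B_{2R}}|u|^{\gamma}+\int_{B_{2R}}\rho_{*}(x,\vec{F})^{\gamma'}+R^{\gamma'}\int_{B_{2R}}|f|^{\gamma'}\Big).
\]
Taking $\gamma$-th roots, using subadditivity of $t\mapsto t^{1/\gamma}$ and the identity $\gamma'/\gamma=1/(\gamma-1)$, yields \eqref{e:crhs}; the restriction $R\le 10$ is used only to keep the constants tidy.

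I do not expect a serious obstacle — this is a weighted energy computation. The only genuinely delicate points are (i) verifying that $\eta^{\gamma}u$ (or $\eta^{\gamma}u^{+}$) is an admissible test function under the integrability hypotheses on $\vec{F}$ and $f$, and (ii) the homogeneity bookkeeping in the Young splittings, which is what forces precisely the exponent $\tfrac{1}{\gamma-1}$ and the stated powers of $R$ to appear — step (ii) is where the actual content of the estimate lies.
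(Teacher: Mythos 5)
Your proposal is correct and follows essentially the same route as the paper: test with $\varphi=\eta^{\gamma}u$, use the $1$-homogeneity (Euler identity) to produce $\int\eta^{\gamma}\rho_{x}(Du)^{\gamma}$, control the cross terms via Fenchel's inequality together with $\rho_{*}(x,(D\rho_{x})(Du))\equiv 1$, and absorb via Young's inequality with the conjugate pair $(\gamma,\gamma')$. The only cosmetic difference is that you track the powers of $R$ directly through the cutoff, whereas the paper normalizes to $R=1$ and recovers the stated powers by scaling at the end.
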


\begin{remark}
Note, it is necessary for $\vec{F}, f \in L^{\gamma}$ (resp., $\vec{F}, f \in L^{\gamma^{\prime}}$) for the equation (resp., conclusion) to make sense\footnote{Technically, by Sobolev embedding we only need $f \in L^{(\frac{\gamma^{\prime} n}{n-\gamma^{\prime}} )^{\prime}} \cap L^{\gamma^{\prime}}$.}. 
\end{remark}

\begin{proof}
Suppose without loss of generality, $R = 1$. Consider the test function $\varphi = \eta^{\gamma} u$ for some $\eta \in C^{1}_{0}(B_{2})$ to be chosen later. Note, 
$$
D \varphi = \eta^{\gamma} Du + \gamma u \eta^{\gamma-1} D \eta.
$$ 
Choose $\epsilon > 0$ so that $(\gamma-1) \epsilon^{\gamma^{\prime}} = \frac{1}{2} \rho( \eta Du)^{\gamma}$. Using the $1$-homogeneity of $\rho_{x}$, Fenchel's inequality, Young's inequality, and \eqref{e:eq1} compute
\begin{align} 
\nonumber \langle \rho_{x}(Du)^{\gamma-1} (D\rho_{x})(Du), D \varphi \rangle & \ge \rho_{x}( \eta Du)^{\gamma} -  \gamma \rho_{x}(\eta Du)^{\gamma-1} \rho_{*}(x,(D \rho)(Du)) \rho_{x}(u D \eta) \\
\nonumber & \ge \rho_{x}( \eta Du)^{\gamma} - \gamma \left[ \frac{ \epsilon^{\gamma^{\prime}} \rho_{x}( \eta D u)^{\gamma}}{\gamma^{\prime}} + \frac{ \rho_{x}(u D \eta)^{\gamma}}{\epsilon^{\gamma} \gamma} \right] \\
\label{e:c1} & \ge \frac{\rho_{x}( \eta Du)^{\gamma}}{2} - c_{\gamma} \rho_{x}(u D \eta)^{\gamma}.
\end{align}
On the other hand, when choosing $\epsilon> 0$ so that $\gamma^{-1} \epsilon^{\gamma} = 1/4$, Fenchel's and Young's inequalities ensure
\begin{align}
\nonumber \langle \vec{F}, D \varphi \rangle & \le \gamma \left(\rho_{*}(x,\vec{F})\eta^{\gamma-1}\right) \rho_{x}(u D \eta) + \eta^{\gamma} \left( \rho_{*}(\vec{F})  \rho(Du) \right) \\
\nonumber & \le \gamma \left[ \frac{\rho_{*}(x,\vec{F})^{\gamma^{\prime}} \eta^{\gamma}}{ \gamma^{\prime}} +  \frac{\rho_{x}(u D \eta)^{\gamma}}{\gamma} \right] + \eta^{\gamma} \left[ \frac{\rho_{*}( x,\vec{F})^{\gamma^{\prime}}}{\epsilon^{\gamma^{\prime}} \gamma^{\prime}} + \epsilon^{\gamma} \frac{ \rho_{x}(Du)^{\gamma}}{\gamma} \right] \\
& \label{e:c2}= \frac{\rho_{x}( \eta Du)^{\gamma}}{4} + c_{\gamma} \rho_{*}(x,\vec{F})^{\gamma^{\prime}} \eta^{\gamma} + \rho_{x}(u D \eta)^{\gamma}.
\end{align}
Combining \eqref{e:c1}, \eqref{e:c2}, and \eqref{e:dt1} yields,
\begin{align} 
\nonumber \int \rho_{x}(\eta Du)^{\gamma} & \le c_{\gamma} \left[ \int \rho_{*}(x,\vec{F})^{\gamma^{\prime}} \eta^{\gamma} + \int \rho_{x}(u D \eta)^{\gamma} + \int f \eta^{\gamma} u \right] \\
\label{e:c4} & \le c_{\gamma} \left[ \int \rho_{*}(x,\vec{F})^{\gamma^{\prime}} \eta^{\gamma} + \int u^{\gamma}(\eta^{\gamma} + \rho_{x}( D \eta)^{\gamma}) +  \int  \eta^{\gamma } f^{\gamma^{\prime}}  \right].
\end{align}
Choosing $0 \le \eta \le 1$, $\eta \equiv 1$ on $B_{1}$, $\eta \equiv 0$ on $B_{2}^{c}$ and $|D \eta| \le 2$ we find
\begin{equation*}
\| \rho_{x}( Du)\|_{L^{\gamma}(B_{1})} \le c_{\gamma, \Lambda,} \left[ \|u\|_{L^{\gamma}(B_{2})} + \| \rho_{*}(x,\vec{F}) \|_{L^{\gamma^{\prime}}(B_{2})}^{\gamma^{\prime}-1} + \| f \|_{L^{\gamma^{\prime}}(B_{2})}^{\gamma^{\prime}-1} \right].
\end{equation*}
Equation \eqref{e:crhs} is recovered by scaling.
\end{proof}

We note that in Theorem \ref{t:crhs}, the fact that $\rho$ can depend on $x$ never needs to be dealt with separately. This is unsurprising because conditions \eqref{e:p1h}, \eqref{e:cx}, and Fenchel's inequality are used at a pointwise level while \eqref{e:eb} is used at a global level, see Remark \ref{r:sobolev}. Hence, to simplify notation, we only explicitly write-out the dependence of $\rho$ on $x$ in the statements of theorems and suppress this dependence throughout all remaining proofs.

\begin{theorem} \label{t:sup}
Suppose $\rho : \Omega \times \rn \setminus \{0\}$ satisfies \eqref{e:meas}, \eqref{e:p1h}, \eqref{e:cx},  and \eqref{e:eb}. Let $u$ be a subsolution to \eqref{e:dt1} and fix $1 < \gamma < n$. If $\vec{F},f$ and $q$ are as in \eqref{e:Ff}, $0 < r < R < 1$, and $\overline{B_{R}} \subset \Omega$ then there exists some $C = C(n,\nu, \Lambda,\gamma,q,p)$ and $\delta = 1 - \frac{n}{q(\gamma-1)} > 0$ so that 
\begin{align*}
\sup_{B_{r}} u^{+} & \le C  \bigg[ \frac{\| u^{+}\|_{L^{p}(B_{R})}}{(R-r)^{\frac{n}{p}} } + R^{\delta} \|\rho_{*}(x,\vec{F})\|_{L^{q}(B_{R})}^{\frac{1}{\gamma-1}} +  R^{\gamma^{\prime} \delta}\|f\|_{L^{\frac{q}{\gamma^{\prime}}}(B_{R})}^{\frac{1}{\gamma-1}} \bigg]
\end{align*}
\end{theorem}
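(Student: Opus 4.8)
The plan is a Moser iteration, organized (as in Theorem~\ref{t:crhs}) so the anisotropy and the $x$-dependence of $\rho$ cost nothing: \eqref{e:p1h}, \eqref{e:cx}, \eqref{e:eq1} and Fenchel's inequality are used pointwise, while \eqref{e:eb} and the Sobolev inequality of Remark~\ref{r:sobolev} are used globally. After translating so the balls are centred at the origin, I would set
$$
k:=R^{\delta}\|\rho_{*}(x,\vec F)\|_{L^{q}(B_{R})}^{\frac1{\gamma-1}}+R^{\gamma'\delta}\|f\|_{L^{q/\gamma'}(B_{R})}^{\frac1{\gamma-1}}+\varepsilon,\qquad \bar u:=u^{+}+k,
$$
with $\varepsilon>0$ sent to $0$ at the end, so $\bar u\ge k>0$. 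The goal is the clean bound $\sup_{B_{r}}\bar u\le C(R-r)^{-n/p}\|\bar u\|_{L^{p}(B_{R})}$ for $p\ge\gamma$, which yields \eqref{e:supbound} for $p\ge\gamma$ upon undoing $\bar u=u^{+}+k$ (using $\|k\|_{L^{p}(B_{R})}\le Ck$ since $R<1$) and letting $\varepsilon\to0$.

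\emph{Step 1 (Caccioppoli with powers).} Fix $\beta\ge1$ and a cutoff $\eta$, and test \eqref{e:dt1} with $\varphi=\eta^{\gamma}(\bar u_{m}^{\beta}-k^{\beta})$, where $\bar u_{m}=\min(\bar u,k+m)$ keeps $\varphi$ bounded; the estimate will be uniform in $m$, and $m\to\infty$ by monotone convergence at the end. Exactly as in the proof of Theorem~\ref{t:crhs} — Euler's relation $D\rho_{x}(\xi)\cdot\xi=\rho_{x}(\xi)$ from \eqref{e:p1h}, the identity \eqref{e:eq1}, Fenchel's and Young's inequalities, \eqref{e:eb} — but now carrying the weight $\bar u^{\beta-1}$ through the cross terms, I would reach, with $v:=\bar u^{(\beta+\gamma-1)/\gamma}$,
$$
\int\eta^{\gamma}\rho_{x}(Dv)^{\gamma}\le C\beta^{c(\gamma)}\!\int(\eta^{\gamma}+\rho_{x}(D\eta)^{\gamma})v^{\gamma}+C\beta^{c(\gamma)}\!\int\eta^{\gamma}\rho_{*}(x,\vec F)^{\gamma'}\bar u^{\beta-1}+C\beta^{c(\gamma)}\!\int\eta^{\gamma}|f|\bar u^{\beta}.
$$
\emph{Step 2 (absorbing the data terms — the crux).} Using $\bar u\ge k$ to replace $\bar u^{\beta-1},\bar u^{\beta}$ by $v^{\gamma}$ up to negative powers of $k$, Hölder with exponent $q/\gamma'$ bounds the $\vec F$-term by $\beta^{c}k^{-\gamma}\|\rho_{*}(x,\vec F)\|_{L^{q}(B_{R})}^{\gamma'}\|\eta v\|_{L^{s}(B_{R})}^{\gamma}$, with $s:=\gamma(q/\gamma')'$; the condition $q>\frac n{\gamma-1}$ is exactly $\delta\in(0,1)$, which is exactly $\gamma\le s\le\gamma^{*}$. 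The decisive move is to interpolate $\|\eta v\|_{L^{s}}\le\|\eta v\|_{L^{\gamma}}^{\delta}\|\eta v\|_{L^{\gamma^{*}}}^{1-\delta}$ (the interpolation parameter being forced to be $1-\delta$ by the exponent arithmetic), then apply Young with a parameter $\sim\beta^{-c}R^{\gamma\delta}$: the $L^{\gamma^{*}}$-piece comes out with coefficient $\lesssim k^{-\gamma}\|\rho_{*}(x,\vec F)\|_{L^{q}}^{\gamma'}R^{\gamma\delta}\le1$ (by the very definition of $k$), small enough to be absorbed, after invoking Remark~\ref{r:sobolev} on $\eta v$, into the left side; the $L^{\gamma}$-piece becomes $\lesssim\beta^{c}R^{-\gamma}\int_{\mathrm{spt}\,\eta}v^{\gamma}$, which is dominated by the $\rho_{x}(D\eta)^{\gamma}$-term already present since $\rho_{x}(D\eta)\ge c\,(r'-r'')^{-1}\ge c R^{-1}$. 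The $f$-term is handled identically. The upshot is a clean reverse-Hölder inequality with no additive error:
$$
\Big(\int_{B_{r''}}\bar u^{\chi q}\Big)^{1/(\chi q)}\le\big(C\beta^{c}(r'-r'')^{-\gamma}\big)^{1/q}\Big(\int_{B_{r'}}\bar u^{q}\Big)^{1/q},\qquad\chi:=\frac{\gamma^{*}}{\gamma}=\frac{n}{n-\gamma}>1,
$$
for all $r''<r'$ and all $q=\beta+\gamma-1\ge\gamma$, the constant $C=C(n,\gamma,\nu,\Lambda,q)$ being independent of $\vec F,f$.

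\emph{Step 3 (iteration) and Step 4 (small exponents).} For $p\ge\gamma$, take $r_{j}=r+2^{-j}(R-r)$ and $q_{j}=p\chi^{j}$; iterating Step~2 gives $\sup_{B_{r}}\bar u\le(\prod_{j\ge0}\Theta_{j})\|\bar u\|_{L^{p}(B_{R})}$ with $\Theta_{j}=(C\beta_{j}^{c}2^{\gamma(j+2)}(R-r)^{-\gamma})^{1/q_{j}}$. The product converges because $\sum_{j}j\chi^{-j}<\infty$, and the powers of $R-r$ accumulate to $(R-r)^{-\gamma\sum_{j\ge0}\chi^{-j}/p}=(R-r)^{-n/p}$ since $\sum_{j\ge0}\chi^{-j}=\frac{\chi}{\chi-1}=\frac n\gamma$. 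This proves \eqref{e:supbound} for $p\ge\gamma$. For $0<p<\gamma$ I would use the standard device: apply the case $p=\gamma$ on $B_{s}\subset B_{t}$, insert $\|\bar u\|_{L^{\gamma}(B_{t})}^{\gamma}\le(\sup_{B_{t}}\bar u)^{\gamma-p}\|\bar u\|_{L^{p}(B_{t})}^{p}$, and apply Young's inequality to obtain $\sup_{B_{s}}\bar u\le\tfrac12\sup_{B_{t}}\bar u+C(t-s)^{-n/p}\|\bar u\|_{L^{p}(B_{R})}$ for all $r\le s<t\le R$; a standard hole-filling iteration (in the spirit of Lemma~\ref{l:ibs}) then removes the $\sup_{B_{t}}\bar u$ and restores the factor $(R-r)^{-n/p}$.

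The main obstacle is Step~2: arranging that the inhomogeneous terms are absorbed with a constant independent of $\|\vec F\|$ and $\|f\|$. This is precisely why the substitution $\bar u=u^{+}+k$ with the exact scalings $R^{\delta},R^{\gamma'\delta}$ is forced and why the borderline hypothesis $q>\frac n{\gamma-1}$ (equivalently $s<\gamma^{*}$, equivalently $\delta>0$) is indispensable — a naive absorption would make $C$ depend on the data. Everything else (the Caccioppoli computation, the convergence of the iteration, and the small-$p$ trick) is routine once Theorem~\ref{t:crhs} and Remark~\ref{r:sobolev} are available, and the $\gamma$-homogeneity is what makes the whole scheme close up with a single exponent in place of the usual $2$.
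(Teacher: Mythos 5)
Your proposal is correct and follows essentially the same route as the paper's proof: a Moser iteration on $\bar u=u^{+}+k$, with the data terms absorbed via H\"older against $L^{q/\gamma'}$, interpolation between $L^{\gamma}$ and $L^{\gamma^{*}}$ (with parameter $\delta$), Young, and the Sobolev inequality of Remark~\ref{r:sobolev}, followed by the standard interpolation trick for $0<p<\gamma$. The only differences are cosmetic bookkeeping — your test function $\eta^{\gamma}(\bar u_{m}^{\beta}-k^{\beta})$ versus the paper's $\eta^{\gamma}v^{\beta}\bar u$, and your building of the $R^{\delta}$, $R^{\gamma'\delta}$ factors directly into $k$ (checked against the absorption via $k^{-\gamma}\|\rho_{*}(x,\vec F)\|_{L^{q}}^{\gamma'}R^{\gamma\delta}\le 1$) rather than recovering them by scaling at the end.
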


\begin{proof}
We consider the test function $\varphi = \eta^{\gamma} v^{\beta} \bar{u}$ for $\beta \ge 0$ where $\bar{u} = u^{+} + \bar{k}$ and $v = \min\{u,m\}$ for some $0 < \bar{k} < m < \infty$, $\bar{k}$ to be chosen later. Notice 
$$
D \varphi = \gamma \eta^{\gamma-1} v^{\beta} \bar{u} D \eta + \beta \eta^{\gamma} v^{\beta} \bar{U} Dv + \eta^{\gamma} v^{\beta} D \bar{u}.$$

We wish to expand \eqref{e:dt1} with this choice of $\varphi$. To this end, first observe $1$-homogeneity, i.e., \eqref{e:p1h} ensures
\begin{align} \label{e:dt2}
\nonumber \big \langle \rho(Du)^{\gamma-1} (D \rho)(Du), & \beta v^{\beta-1} \bar{u} \eta^{\gamma} D v + \eta^{\gamma} v^{\beta} D \bar{u} \big \rangle \\
& = \beta \rho(Dv)^{\gamma} v^{\beta} \eta^{\gamma} + \rho(D\bar{u})^{\gamma} \eta^{\gamma} v^{\beta}.
\end{align}
Next we apply Fenchel's and Young's inequalities in combination with \eqref{e:eq1} for some $\epsilon = \epsilon(\gamma) > 0$ so that $ (\gamma-1) \epsilon^{\gamma^{\prime}} = 1/2$. Then,
\begin{align} 
\nonumber \big \langle \rho(Du)^{\gamma-1} (D \rho)(Du),&  \gamma \eta^{\gamma-1} v^{\beta} \bar{u} D \eta \big \rangle  \\
\nonumber & \ge - \gamma v^{\beta} \left( \rho(D \bar{u}) \eta) \right)^{\gamma-1} \rho_{*}(D\rho(Du)) \left( \rho(D \eta) \bar{u} \right)      \\
\label{e:dt3}  & \ge - \gamma v^{\beta} \left[  \epsilon^{\gamma^{\prime}} \frac{\rho(D \bar{u})^{\gamma} \eta^{\gamma}}{\gamma^{\prime}} + \frac{ \rho(D \eta)^{\gamma} \bar{u}^{\gamma}}{ \epsilon ^{\gamma} \gamma} \right].
\end{align}
Since $\frac{\gamma}{\gamma^{\prime}} = \gamma-1$, this choice of $\epsilon$ ensures \eqref{e:dt3} becomes
\begin{align} \label{e:dt4}
 \Langle \rho(Du)^{\gamma-1} (D \rho)(Du), \gamma \eta^{\gamma-1} v^{\beta} \bar{u} D \eta \Rangle & \ge  -  \frac{v^{\beta} \rho(D \bar{u})^{\gamma} \eta^{\gamma}}{2} - c_{\gamma} v^{\beta} \rho(D\eta)^{\gamma} \bar{u}^{\gamma}, 
\end{align}
where $c_{\gamma}$ may change depending on the line, but depends only on $\gamma$.

Now we look at the righthand side. We split this into two pieces and treat the first piece in much the same fashion as above. 
\begin{align}
\nonumber \big\langle \vec{F}, & \beta v^{\beta-1} \bar{u} \eta^{\gamma} Dv + \eta^{\gamma} v^{\beta} D \bar{u} \big\rangle  \le  \rho_{*}(\vec{F}) \left[ \beta v^{\beta} \eta^{\gamma} \rho(Dv) \right] + \rho_{*}(\vec{F}) \left[ \eta^{\gamma} v^{\beta} \rho(D \bar{u}) \right] \\
\nonumber & = (\eta^{\gamma} v^{\beta}) \left[ \left(\beta\right) \left( \rho_{*}(\vec{F}) \right) \left(  \rho(Dv) \right) + \rho_{*}(\vec{F}) \rho(D \bar{u}) \right] \\
\nonumber & \le   \eta^{\gamma} v^{\beta} \left[ \left( \beta \frac{\rho_{*}(\vec{F})^{\gamma^{\prime}}}{\epsilon_{1}^{\gamma^{\prime}} \gamma^{\prime}} + \beta \epsilon_{1}^{\gamma} \frac{ \rho(D v)^{\gamma}}{\gamma} \right) + \left( \frac{ \rho_{*}(\vec{F})^{\gamma^{\prime}}}{ \epsilon_{2}^{\gamma^{\prime}} \gamma^{\prime}} + \epsilon_{2}^{\gamma} \frac{ \rho(D \bar{u})^{\gamma}}{\gamma} \right) \right] \\
& \label{e:dt5} =  \eta^{\gamma} v^{\beta} \rho_{*}(\vec{F})^{\gamma^{\prime}}\left( \frac{\beta}{\epsilon_{1}^{\gamma^{\prime}} \gamma^{\prime}} + \frac{1}{\epsilon_{2}^{\gamma^{\prime}} \gamma^{\prime}} \right) + \frac{\beta \epsilon_{1}^{\gamma} }{\gamma} \eta^{\gamma} v^{\beta} \rho(Dv)^{\gamma} + \frac{\epsilon_{2}^{\gamma}}{2} \eta^{\gamma} v^{\beta} \rho(D \bar{u})^{\gamma}.
\end{align}
Since we want to absorb the last two terms of \eqref{e:dt5} into \eqref{e:dt2} we choose $\epsilon_1, \epsilon_{2}$ so that $\gamma^{-1} \beta \epsilon_{1}^{\gamma} = \frac{\beta}{2}$ and $\gamma^{-1} \epsilon_{2}^{\gamma} = \frac{1}{4}$. The need for choosing $1/4$ for the $\epsilon_{2}$ coefficient is due to the fact that we'll also be absorbing the $\rho(D \bar{u})$-term from \eqref{e:dt4} into \eqref{e:dt2}. We note both $\epsilon_{1}$ and $\epsilon_{2}$ depend solely on $\gamma$. All-in-all this allows us to re-write \eqref{e:dt5} as
\begin{align} \label{e:dt6}
 \Langle  \vec{F}, \beta v^{\beta-1} \bar{u} \eta^{\gamma} Dv + \eta^{\gamma} v^{\beta} D \bar{u} \Rangle & \le  c_{\gamma}(1 + \beta) \eta^{\gamma} v^{\beta} \rho_{*}(\vec{F})^{\gamma^{\prime}} + \frac{\beta}{2} \eta^{\gamma} v^{\beta} \rho(Dv)^{\gamma} \\ 
 \nonumber & + \frac{1}{4} \eta^{\gamma} v^{\beta} \rho(D \bar{u})^{\gamma}.
\end{align}

We now deal with the final term via Fenchel and Young's inequalities
\begin{align}
\nonumber \Langle \vec{F}, \gamma \eta^{\gamma-1} v^{\beta} \bar{u} D \eta \Rangle & \le  \gamma \rho_{*}(\vec{F}) \eta^{\gamma-1} v^{\beta} \bar{u} \rho(D \eta) \\
\nonumber & \le  \gamma v^{\beta} \left[ \frac{ \rho_{*}(\vec{F})^{\gamma^{\prime}} \eta^{\gamma} }{\gamma^{\prime}} + \frac{ \bar{u}^{\gamma} \rho(D \eta)^{\gamma}}{\gamma} \right] \\
\label{e:dt7} & =  (\gamma-1) \rho_{*}(\vec{F})^{\gamma^{\prime}} \eta^{\gamma} v^{\beta} + \bar{u}^{\gamma} \rho(D \eta)^{\gamma} v^{\beta}.
\end{align}

Finally, plugging \eqref{e:dt2}, \eqref{e:dt4}, \eqref{e:dt6}, and \eqref{e:dt7} into \eqref{e:dt1} yields  
\begin{align} 
\nonumber \frac{\beta}{2}& \int_{\Omega} \rho(Dv)^{\gamma} v^{\beta} \eta^{\gamma} + \frac{1}{4} \int_{\Omega} \rho(D \bar{u})^{\gamma} \eta^{\gamma} v^{\beta} \\
\nonumber &  \le  c_{\gamma} \left[ \int_{\Omega} v^{\beta} \rho(D \eta)^{\gamma} \bar{u}^{\gamma} + \int_{\Omega} \eta^{\gamma} v^{\beta} \rho_{*}(\vec{F})^{\gamma^{\prime}} + \int_{\Omega} f \eta^{\gamma} v^{\beta} \bar{u} \right] \\
 \label{e:dt8} & \le c_{\gamma} \left[ \int_{\Omega} v^{\beta} \rho(D \eta)^{\gamma} \bar{u}^{\gamma} + (1+\beta)\int_{\Omega} \eta^{\gamma} v^{\beta} \frac{\bar{u}^{\gamma}}{\bar{k}^{\gamma}} \rho_{*}(\vec{F})^{\gamma^{\prime}} + \int_{\Omega}  \frac{f}{\bar{k}^{\gamma-1}} \eta^{\gamma} v^{\beta} \bar{u}^{\gamma} \right].
\end{align}

The last inequality used $\bar{u} \ge \bar{k}$. Now set $w= v^{\beta/\gamma} \bar{u}$, and observe
$$
Dw = \frac{\beta}{\gamma} v^{\frac{\beta}{\gamma}-1} \bar{u} Dv + v^{\frac{\beta}{\gamma}} D \bar{u} = \frac{\beta}{\gamma} v^{ \frac{\beta}{\gamma}} Dv + v^{\frac{\beta}{\gamma}} D \bar{u}.
$$
Since $\rho(\xi_{1} + \xi_{2}) \le \rho(\xi_{1}) + \rho(\xi_{2})$ for all $\xi_{1}, \xi_{2}$ it follows that
\begin{align*}
\eta^{\gamma} \rho(D w)^{\gamma} &\le \eta^{\gamma} \left( \frac{\beta}{\gamma} v^{\frac{\beta}{\gamma}} \rho(Dv) + v^{\frac{\beta}{\gamma}} \rho(D \bar{u}) \right)^{\gamma} \\
& \le 2^{\gamma-1} \left( \left( \frac{\beta}{\gamma} \right)^{\gamma} \eta^{\gamma} v^{\beta} \rho(Dv)^{\gamma} + \eta^{\gamma} v^{\beta} \rho(D \bar{u})^{\gamma} \right).
\end{align*}
In particular, this guarantees that for some $c_{\gamma}$
\begin{equation} \label{e:dt9}
\int_{\Omega} \eta^{\gamma} \rho(Dw)^{\gamma} \le c_{\gamma} (1 + \beta^{\gamma-1}) \left[ \frac{\beta}{2} \int_{\Omega} \rho(Dv)^{\gamma} v^{\beta} \eta^{\gamma} + \frac{1}{4} \int_{\Omega} \rho(D \bar{u})^{\gamma} \eta^{\gamma} v^{\beta} \right].
\end{equation}
Combining \eqref{e:dt8} and \eqref{e:dt9} yields
\begin{equation} \label{e:dt10} 
\int \eta^{\gamma} \rho(Dw)^{\gamma} \le c_{\gamma}(1 + \beta^{\gamma}) \left[ \int_{\Omega} w^{\gamma} \rho(D \eta)^{\gamma}  +\int_{\Omega} (\eta w)^{\gamma} \left( \frac{\rho_{*}(\vec{F})^{\gamma^{\prime}}}{\bar{k}^{\gamma}} + \frac{f}{\bar{k}^{\gamma-1}} \right) \right].
\end{equation}
Due to the observation that $\rho(D(\eta w))^{\gamma} \le 2^{\gamma-1} (w^{\gamma} \rho(D \eta)^{\gamma} + \eta^{\gamma} \rho(D w)^{\gamma})$, \eqref{e:dt10} implies
\begin{equation} \label{e:dt10.1}
\int_{\Omega} \rho(D( \eta w))^{\gamma} \le c_{\gamma}(1 + \beta^{\gamma}) \left[ \int_{\Omega} w^{\gamma} \rho(D \eta)^{\gamma} + \int_{\Omega} (\eta w)^{\gamma} \left(
\frac{\rho_{*}(\vec{F})^{\gamma^{\prime}}}{\bar{k}^{\gamma}} + \frac{f}{\bar{k}^{\gamma-1}} \right) \right].
\end{equation}

Our next goal is to deal with the term $\int (\eta w)^{\gamma} \frac{\rho_{*} (\vec{F})^{\gamma^{\prime}}}{\bar{k}^{\gamma}}$. We now outline how we will do this: We  first use Holder's inequality to make the $L^{q}$ norm of $\vec{F}$ appear. To this end, we will introduce the parameter $\alpha_{1} = \alpha(q,\gamma) = \frac{q}{q-\gamma^{\prime}}$ which is equivalent to $\alpha_{1}^{\prime} = \frac{q}{\gamma^{\prime}}$. This is precisely where the requirement $f \in L^{\frac{q}{\gamma^{\prime}}}$ comes from.

Next, by choosing $\bar{k}$ appropriately, we will make the term with $\rho(\vec{F}) + f$ be absorbed into a $1$. At this point, the remaining term with $\eta w$ will have a strange power. So, we use interpolation, and the fact that $ \gamma < \gamma \alpha_{1} < \frac{n \gamma}{n- \gamma}$, to re-write the strange power as a linear combination of the $L^{\gamma}$ and $L^{\gamma^{*}}$ norms of $\eta w$, where $\gamma^{*}$ is the Sobolev conjugate of $\gamma$.  The necessary upper-bound on $\alpha_{1}$ is satisfied so long as $q > \frac{n}{\gamma-1}$. 

By making the coefficient of the $L^{\gamma}$ norm of $\eta w$ larger, we can choose the $L^{\gamma^{*}}$ norm of $\eta w$ to be arbitrarily small. This is necessary to apply the Gagliardo-Nirenberg-Sobolev inequality to turn this latter norm into an estimate on the $L^{\gamma}$ norm of $\rho(D (\eta w))$ which can then be absorbed into the left hand side. Applying Young's inequality makes this weighted-linear combination of norms appear, at which point we will choose $\alpha_{2} = \alpha(n,q,\gamma) = \theta_{1}^{-1}$, where $\theta_{1}$ is the interpolation power. This conveniently makes all exponents outside of integrals disappear, allowing the desired simplifications to all occur.

We now begin the process outlined above by applying Holder's inequality,
\begin{align} \nonumber
\int_{\Omega}(\eta w)^{\gamma} \left(\frac{\rho_{*}(\vec{F})^{\gamma^{\prime}}}{k^{\gamma}} + \frac{f}{k^{\gamma-1}} \right)& \le \| (\eta \omega)^{\gamma}\|_{L^{\alpha_{1}}(\Omega)}  \left\| \frac{\rho_{*}(\vec{F})^{\gamma^{\prime}}}{k^{\gamma}} +\frac{ f }{k^{\gamma-1}} \right\|_{L^{\alpha_{1}^{\prime}}(\Omega)} \\
\label{e:dt11} & \le C(\gamma,q) \| \eta w \|_{L^{\gamma \alpha_{1}}(\Omega)}^{\gamma}
\end{align}
where $\alpha_{1}^{\prime} = \frac{q}{\gamma^{\prime}}$ is as above, and $\bar{k}$ is chosen so that $\bar{k} = k_{1} + k_{2}$ where $k_{1}^{\gamma} = \|\rho_{*}(\vec{F})\|_{L^{q}(\Omega)}^{\gamma^{\prime}}$ and $k_{2}^{\gamma-1} = \|f\|_{L^{\frac{q}{\gamma^{\prime}}}(\Omega)}$. If $\vec{F},f \equiv 0$ choose $\bar{k} > 0$ arbitrary, and you can later send $\bar{k}$ to zero. Next, we define $\theta_{1} = \theta_{1}(q, n,\gamma) \in (0,1)$ so that $\frac{1}{\gamma \alpha_{1}} = \frac{\theta_{1}}{\gamma} + \frac{(1- \theta_{1})(n-\gamma)}{n \gamma}$. Note that if $\alpha_{2} = \alpha(q,n,\gamma) = \theta_{1}^{-1}$ then $\alpha_{2}^{\prime} = (1-\theta_{1})^{-1}$.  Riesz-Thorin interpolation applied to \eqref{e:dt11}, Young's inequality, and the Gagliardo-Nirenberg-Sobolev inequality consecutively ensure
\begin{align} \nonumber
\int_{\Omega}(\eta w)^{\gamma} \frac{\rho_{*}(\vec{F})^{\gamma^{\prime}}}{k^{\gamma}}& \le \left( \| \eta w\|_{L^{\gamma}(\Omega)}^{\theta_{1}} \| \eta w\|_{L^{\gamma^{*}}(\Omega)}^{1-\theta}\right)^{\gamma} \\
& \nonumber \le  \frac{1}{\epsilon^{\alpha_{2}} \alpha_{2}} \|\eta w\|_{L^{\gamma}(\Omega)}^{\gamma \theta_{1} \alpha_{2}} + \frac{\epsilon^{\alpha_{2}^{\prime}}}{\alpha_{2}^{\prime} } \|\eta w\|_{L^{\gamma^{*}}(\Omega)}^{\gamma(1 - \theta_{1}) \alpha_{2}^{\prime}} \\
& \nonumber =  \frac{1}{\epsilon^{\alpha_{2}} \alpha_{2}} \|\eta w\|_{L^{\gamma}(\Omega)}^{\gamma} + \frac{\epsilon^{\alpha_{2}^{\prime}}}{\alpha_{2}^{\prime} } \|\eta w\|_{L^{\gamma^{*}}(\Omega)}^{\gamma} \\
\label{e:dt11.1} & \le \epsilon^{- \alpha_{2}} \alpha_{2}^{-1} \| \eta w \|_{L^{\gamma}(\Omega)}^{\gamma} + C(q,n,\gamma,\nu) \epsilon^{\alpha_{2}^{\prime}}  \| \rho(D(\eta w)) \|_{L^{\gamma}(\Omega)}^{\gamma}.
\end{align}

See Remark \ref{r:sobolev} for our non-standard application of Gagliardo-Nirenberg-Sobolev inequality in \eqref{e:dt11.1}. Next, we want to plug \eqref{e:dt11.1} into \eqref{e:dt10.1} and subtract over the $\rho(D(\eta w))$ term, so we choose $\epsilon$ so that the coefficient of $\| \rho(D(\eta w))\|_{L^{\gamma}(\Omega)}^{\gamma}$ is $1/2$. That is, choose $\epsilon = \epsilon(q,n,\gamma,\nu) >0$ by 
$$
c_{\gamma}(1+\beta^{\gamma}) C(q,n,\gamma,\nu) \epsilon^{\alpha^{\prime}_{2}} = \frac{1}{2}. 
$$
Then the choice of $\epsilon$, \eqref{e:dt11.1}, and \eqref{e:dt10.1} imply
\begin{align} \nonumber
\int_{\Omega} \rho(D(\eta w))^{\gamma} & \le c_{\gamma}(1 + \beta^{\gamma}) \left[ \int_{\Omega} w^{\gamma} \rho(D \eta)^{\gamma} + C_{q,n,\gamma,\nu} (1 + \beta^{\gamma})^{\alpha_{2}-1} \int_{\Omega} w^{\gamma} \eta^{\gamma} \right] \\
 \label{e:dt12} & \le  C_{q,n,\gamma,\nu} (1 + \beta^{\gamma})^{\alpha_{2}} \left[ \int_{\Omega} w^{\gamma} (\rho(D \eta)^{\gamma} + \eta^{\gamma}) \right],
\end{align}
where as always, $\alpha_{2} = \alpha(n,q,\gamma) > 0$. Finally, the Gagliardo-Nirenberg-Sobolev inequality applied to \eqref{e:dt12} implies
\begin{equation} \label{e:dt13}
\| \eta w \|_{\gamma \chi} \le C_{q,n,\gamma,\nu}(1 + \beta^{\gamma})^{\frac{\alpha_{2}}{\gamma}} \left[ \int_{\Omega} w^{\gamma}(\rho(D \eta)^{\gamma} + \eta^{\gamma}) \right]^{\frac{1}{\gamma}},
\end{equation}
where $\chi = \frac{n}{n-\gamma}$. Choose the cut-off function $\eta$ so that with $0 < r < R <1$ and some $B_{R}(x) \subset \subset \Omega$, $\eta \in C^{1}_{0}(B_{R})$ and
\begin{equation} \label{e:dt13.1}
\eta \equiv 1 \text{ in } B_{r} \quad \text{and} \quad |D \eta| \le \frac{2}{R-r}.
\end{equation}

Then \eqref{e:dt13} guarantees
\begin{equation*} 
\left(\int_{B_{r}(x)} w^{\gamma \chi}\right)^{\frac{1}{\chi}} \le C_{n,\gamma,q, \nu, \Lambda} \frac{(1+ \beta^{\gamma-1})^{\alpha_{2}}}{(R-r)^{\gamma}} \int_{B_{R}} w^{\gamma}
\end{equation*}
where our constant gained a dependence on $\Lambda$, which arises by applying \eqref{e:dt13.1} in the form $\rho(D \eta) \le \Lambda |D \eta| \le  2\Lambda(R-r)^{-1}$. Recall the definition of $w$ and use $v \le \bar{u}$ to discover
\begin{equation} \label{e:dt14} 
\left(\int_{B_{r}(x)} v^{(\beta + \gamma)\chi}  \right)^{\frac{1}{\chi}} \le C_{n,\gamma,q, \nu, \Lambda } \frac{(1+ \beta^{\gamma-1})^{\alpha_{2}}}{(R-r)^{\gamma}} \int_{B_{R}} \bar{u}^{\beta + \gamma}.
\end{equation}
 Taking $m \uparrow \infty$ in \eqref{e:dt14} yields
\begin{equation} \label{e:dt15}
\| \bar{u} \|_{L^{(\beta + \gamma)\chi}B_{r}(x)}  \le \left( C_{n,\gamma,q, \nu , \Lambda } \frac{(1+ \beta^{\gamma-1})^{\alpha_{2}}}{(R-r)^{\gamma}}\right)^{\frac{1}{\beta + \gamma}} \| \bar{u}\|_{L^{\beta + \gamma}(B_{R}(x))},
\end{equation}
so long as the right hand side is finite. Note the constant on the righthand side is independent of $\beta$, suggesting we iterate. We begin with $\beta = \beta_{0} = 0$. For $k \ge 1$ define $\beta_{k} = \gamma( \chi^{k-1}-1)$. For $k \ge 0$ we consider $r_{k} = r + \frac{R-r}{2^{k+1}}$. Note, $(\beta_{k} + \gamma)\chi = \gamma \chi^{k} = (\beta_{k+1}+\gamma)$ and $r_{k-1} - r_{k} = 2^{-(k+1)} (R-r)$. Writing $C = C_{n,\gamma,q, \nu, \Lambda}$, for $k \ge 1$, with these choices \eqref{e:dt15} reads
\begin{align*} 
\| \bar{u}\|_{\displaystyle L^{\beta_{k} + \gamma}(B_{r_{k}}(x))} & \le \left( C \frac{ (1 + \beta_{k}^{\gamma-1})^{\alpha_{2}}}{(R-r)^{\gamma} } \right)^{\frac{1}{\beta_{k}+\gamma}} 2^{\frac{k+1}{\gamma \chi^{k-1}}} \| \bar{u} \|_{\displaystyle  L^{\beta_{k-1} + \gamma}(B_{r_{k-1}}(x))}.
\end{align*}
Since $\beta_{k} = \gamma(\chi^{k-1}-1)$ we observe that for $k \ge 2$,
\begin{align*} 
\left( \frac{ (1 + \beta_{k}^{\gamma-1})^{\alpha_{2}}}{(R-r)^{\gamma} } \right)^{\frac{1}{\beta_{k}+\gamma}} & \le (R-r)^{- \chi^{k}} (C (1+\gamma \chi))^{\frac{k (\gamma-1) \alpha_{2}}{\gamma \chi^{k-1} -1}} \\
& \le (R-r)^{-\chi^{k}} C^{ \frac{k}{\chi^{k}}}.
\end{align*}
After iterating it follows that for all $k \ge 2$,
$$
\| \bar{u}\|_{\displaystyle L^{\beta_{k} + \gamma}(B_{r_{k}}(x))} \le (R-r)^{-\sum_{i=0}^{\infty} \chi^{i}}C^{ \sum_{i=0}^{\infty} \frac{i}{\chi^{i-1}}} \| \bar{u}\|_{L^{\beta_{0}+\gamma }(B_{r_{1}}(x))},
$$
where still $C= C(n,\gamma, q, \nu, \Lambda)$. Since $\chi > 1$, $u \in W^{1,\gamma}(\Omega)$, and $\beta_{0} = 0$, the right hand side is seen to be finite, and is independent of $k$. Taking $k \to \infty$ on the left hand side, and recalling $\bar{u} = u^{+} + \bar{k}$ yields 
$$
 \| u^{+} \|_{L^{\infty}(B_{r}(x))} \le C (R-r)^{-\frac{n}{\gamma}}\left[ \|u^{+}\|_{L^{\gamma}(B_{R})} + \bar{k} \right].
$$
Recalling $\bar{k}= \| \rho_{*}(\vec{F})\|_{q}^{\frac{\gamma^{\prime}}{\gamma}} + \|f\|_{\frac{q}{\gamma^{\prime}}}^{\frac{1}{\gamma-1}}$ and noting $\frac{1}{1- \chi^{-1}} = \frac{n}{\gamma}$ yields the result for the case $p \ge \gamma$. A classical scaling argument covers the case $0 < p < \gamma$. See, for instance, \cite[p. 75]{han2011elliptic}. 
\end{proof}

\begin{theorem} \label{t:inf}
Let $\rho : \Omega \times \rn \setminus \{0\}$ satisfy \eqref{e:meas}, \eqref{e:p1h}, \eqref{e:cx}, and \eqref{e:eb}. Suppose $u \in W^{1,\gamma}(\Omega)$ is a supersolution to \eqref{e:dt1} in $\Omega$ and $\vec{F},f$, and $q$ are as in \eqref{e:Ff}. If $B_{2 R} \subset \Omega$ and $0 < p < \frac{(\gamma-1) n}{n- \gamma}$, then for any $0 < \theta < \tau < 1$ there exists $C = C(n,\gamma,q,p,\nu,\Lambda,\theta,\tau)$ and $\delta = 1 - \frac{n}{q(\gamma-1)} > 0$ so that
$$
\inf_{B_{\theta R}} u^{+} + R^{\delta} \|\rho_{*}(\vec{F})\|_{L^{q}(B_{R})}^{\frac{1}{\gamma-1}} + R^{\gamma^{\prime}\delta} \| f\|_{L^{\frac{q}{\gamma^{\prime}}}(B_{R})}^{\frac{1}{\gamma-1}} \ge C R^{-\frac{n}{p}}\| u^{+}\|_{L^{p}(B_{\tau R})}.
$$
\end{theorem}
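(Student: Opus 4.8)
The plan is to run Moser's weak Harnack argument, adapted to the $\gamma$-homogeneous anisotropic structure. Since truncation from below preserves the supersolution property, $u^{+}$ is again a supersolution of \eqref{e:dt1}, and since the equation sees only $Du$, so is $\bar u := u^{+}+k$ for a constant $k>0$ to be chosen; thus we may assume $u\ge 0$ and work with $\bar u\ge k$. Exactly as in the proof of Theorem \ref{t:sup}, set $k=k_{1}+k_{2}$ with $k_{1}^{\gamma}=\|\rho_{*}(x,\vec F)\|_{L^{q}(B_{R})}^{\gamma^{\prime}}$ and $k_{2}^{\gamma-1}=\|f\|_{L^{q/\gamma^{\prime}}(B_{R})}$, so that after Hölder and interpolation every inhomogeneous term is absorbed into a harmless constant, and the normalizing rescaling $x\mapsto Rx$ (which reduces everything to $R=1$) produces the factors $R^{\delta}$ and $R^{\gamma^{\prime}\delta}$ in front of these norms, with $\delta=1-\frac{n}{q(\gamma-1)}$; if $\vec F,f\equiv 0$ we take $k>0$ arbitrary and let $k\downarrow 0$ at the end. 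As noted after Theorem \ref{t:crhs}, conditions \eqref{e:p1h}, \eqref{e:cx} and Fenchel's inequality enter only pointwise, with no constants, while only \eqref{e:eb} is used quantitatively, so all constants below depend on $\rho$ only through $\nu,\Lambda$, proving the asserted restricted dependence. Throughout put $\chi:=\frac{n}{n-\gamma}$ and note $\frac{(\gamma-1)n}{n-\gamma}=(\gamma-1)\chi$.

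The core consists of three families of Caccioppoli estimates, obtained by testing \eqref{e:dt1} with $\varphi=\eta^{\gamma}v^{\beta}$, where $v=\min\{\bar u,m\}$, $\eta$ is a cutoff between concentric balls in $B_{2R}$, and $m\uparrow\infty$ at the end (as in Theorem \ref{t:sup} this truncation keeps $\varphi$ admissible). Expanding $D\varphi$, using $1$-homogeneity \eqref{e:p1h}, Fenchel's inequality together with $\rho_{*}(x,D\rho(x,\cdot)(Du))\equiv 1$ from \eqref{e:eq1}, and Young's inequality exactly as in the displays \eqref{e:dt2}--\eqref{e:dt12}, one isolates a favorable term $\int\eta^{\gamma}\rho(x,D(\bar u^{(\beta+\gamma)/\gamma}))^{\gamma}$ on the left against $\int\rho(x,D\eta)^{\gamma}\bar u^{\beta+\gamma}$ plus $\vec F,f$-terms on the right; the sign is favorable for all $\beta$ with $\beta+\gamma<0$ and for $0<\beta+\gamma<\gamma-1+\gamma$ with $\beta<\gamma-1$. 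Applying the Gagliardo--Nirenberg--Sobolev inequality in the form of Remark \ref{r:sobolev} yields, for such $\beta$ and concentric radii $r_{1}<r_{2}$,
\[
\Big(\int_{B_{r_{1}}}\bar u^{(\beta+\gamma)\chi}\Big)^{1/\chi}\le C\,\frac{(1+|\beta|^{\gamma})^{c_{0}}}{(r_{2}-r_{1})^{\gamma}}\int_{B_{r_{2}}}\bar u^{\beta+\gamma},\qquad c_{0}=c_{0}(n,\gamma,q).
\]
Taking instead the critical exponent $\beta=1-\gamma$ (so $v^{\beta}Dv=D\log v$), the same computation with $\bar u^{-\gamma}\rho(x,D\bar u)^{\gamma}=\rho(x,D\log\bar u)^{\gamma}$ and $\bar u\ge k$, together with $q>\frac{n}{\gamma-1}$, gives the logarithmic estimate
\[
\int_{B_{r}}\rho(x,D\log\bar u)^{\gamma}\le C\,r^{n-\gamma}\qquad\text{whenever }B_{r}\subset B_{\tau' R},
\]
after absorbing the inhomogeneity via the choice of $k$.

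Next comes the iteration. On the positive side, a finite iteration of the reverse Hölder inequality for positive powers $\bar u^{\beta+\gamma}$ with $\beta+\gamma\in(0,\gamma-1)$ — the constraint $\beta<\gamma-1$ being precisely what caps the reachable exponent at $(\gamma-1)\chi$ — gives, for any fixed $p<(\gamma-1)\chi$, a small $s_{0}>0$, and radii $\theta R<\tau R<\tau' R<R$,
\[
R^{-n/p}\|\bar u\|_{L^{p}(B_{\tau R})}\le C\,R^{-n/s_{0}}\|\bar u\|_{L^{s_{0}}(B_{\tau' R})}.
\]
On the negative side, the reverse Hölder inequalities for all $\beta$ with $\beta+\gamma<0$ (no upper constraint, the sign being always favorable) iterate with summable constants, as in Theorem \ref{t:sup}, from exponent $-s_{0}$ down to $-\infty$, yielding
\[
\inf_{B_{\theta R}}\bar u\ \ge\ C\Big(R^{-n}\int_{B_{\tau' R}}\bar u^{-s_{0}}\Big)^{-1/s_{0}}.
\]
Finally, the logarithmic estimate combined with the Poincaré inequality in a ball (Remark \ref{r:sobolev} form) shows that $w:=\log\bar u$ satisfies $\big(r^{-n}\int_{B_{r}}|w-(w)_{x,r}|^{\gamma^{*}}\big)^{1/\gamma^{*}}\le C$ uniformly over $B_{r}\subset B_{\tau' R}$, hence $w\in\mathrm{BMO}$ with norm bounded by $C(n,\gamma,\nu,\Lambda)$; the John--Nirenberg inequality then produces $s_{0}=s_{0}(n,\gamma,\nu,\Lambda)>0$ (which we take as the $s_{0}$ above) with $\big(R^{-n}\int_{B_{\tau' R}}\bar u^{s_{0}}\big)\big(R^{-n}\int_{B_{\tau' R}}\bar u^{-s_{0}}\big)\le C$. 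Chaining the three displays, using $u^{+}\le\bar u$, and recalling that after the rescaling $k\asymp R^{\delta}\|\rho_{*}(x,\vec F)\|_{L^{q}(B_{R})}^{1/(\gamma-1)}+R^{\gamma^{\prime}\delta}\|f\|_{L^{q/\gamma^{\prime}}(B_{R})}^{1/(\gamma-1)}$ gives
\[
R^{-n/p}\|u^{+}\|_{L^{p}(B_{\tau R})}\le C\,\inf_{B_{\theta R}}\bar u=C\Big(\inf_{B_{\theta R}}u^{+}+k\Big),
\]
which is the claim.

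The main obstacle is the bridging step across the exponent $0$. The logarithmic Caccioppoli estimate itself follows by rerunning the computations of Theorem \ref{t:sup} with $\beta=1-\gamma$, but combining it with John--Nirenberg, and — more delicately — carrying the additive correction $k$ (hence the $R^{\delta}$ and $R^{\gamma^{\prime}\delta}$ contributions) consistently through every reverse Hölder iteration on both the positive and the negative side without the iteration constants degenerating, is where the care lies; the bookkeeping of the admissible exponent range $(0,(\gamma-1)\chi)$ for positive powers, forced by the sign condition $\beta<\gamma-1$, is the other point requiring attention.
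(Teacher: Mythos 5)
Your proposal follows essentially the same route as the paper: Moser iteration on $\bar u=u^{+}+k$ with the truncation $v=\min\{\bar u,m\}$, negative exponents iterated to $-\infty$ to reach the infimum, a finite positive-exponent iteration up to $p<(\gamma-1)\chi$, and the crossing of exponent zero via the logarithmic test function $\eta^{\gamma}\bar u^{1-\gamma}$, Poincar\'e, BMO and John--Nirenberg, with $k$ chosen from the $L^{q}$ and $L^{q/\gamma^{\prime}}$ norms of $\vec F,f$ and scaling producing the $R^{\delta}$ factors. Two bookkeeping slips are worth correcting but do not change the architecture: the favorable sign on the positive side requires the coefficient of the gradient term to be negative, i.e.\ $\beta<-1$ in your parametrization (equivalently $\beta>1$ in the paper's $\varphi=\eta^{\gamma}v^{-\beta}\bar u$), which is what genuinely caps the reachable exponent at $(\gamma-1)\chi$ --- your stated condition $\beta<\gamma-1$ would give the wrong cap $(2\gamma-1)\chi$; and for $\beta=1-\gamma$ one has $\bar u^{-\gamma}\rho(D\bar u)^{\gamma}=\rho(D\log\bar u)^{\gamma}$ (as you write), not $v^{\beta}Dv=D\log v$, which would require $\beta=-1$.
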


Two corollaries of Theorem \ref{t:sup} and Theorem \ref{t:inf} are the strong maximum principle and the weak-Harnack inequality (Theorems \ref{t:mp} and \ref{t:iwh}), which we will state and prove before proving Theorem \ref{t:iwh}.

\begin{theorem} \label{t:mp}
Let $\rho : \Omega \times \rn \setminus \{0\}$ satisfy \eqref{e:meas}, \eqref{e:p1h}, \eqref{e:cx}, and \eqref{e:eb}. Suppose $u \in W^{1,\gamma}(\Omega)$ is a super solution to \eqref{e:dt1} in $\Omega$, and $\vec{F}, f \equiv 0$. Then, if for some ball $B \subset \subset \Omega$
$$
\sup_{B} u = \sup_{\Omega} u \ge 0,
$$
then the function $u$ must be constant in $\Omega$.
\end{theorem}

\begin{proof}
Suppose $B^{\prime} \subset \subset \Omega$ is so that $\sup_{B^{\prime}} u = \sup_{\Omega} u$. If necessary there is a much smaller ball $B_{R} \subset \subset \Omega$ so that $\sup_{B_{R/3}} u = \sup_{B_{R/3}} \Omega$. Without loss of generality $B_{2R} \subset \subset \Omega$. Let $M = \sup_{\Omega} u$ and applying Theorem \ref{t:iwh} to the non-negative supersolution $M-u$ it follows that 
$$
R^{-n} \|M-u\|_{L^{1}(B_{2R/3})} \le C \inf_{B_{R/3}} (M-u) = 0. 
$$
Hence, $u \equiv M$ on $B_{2R/3}$, which implies the theorem.
\end{proof}

\begin{theorem} \label{t:iwh}
Suppose $\rho : \Omega \times \rn \setminus \{0\} \to (0, \infty)$ satisfies \eqref{e:meas}, \eqref{e:p1h}, \eqref{e:cx}, and \eqref{e:eb}. If $1 < \gamma < n$ and $u \in W^{1,\gamma}(\Omega)$ is a nonnegative solution to \eqref{e:dt1} in $\Omega$ for some $\vec{F}$, $f$, and $q$ are as in \eqref{e:Ff} and some $\overline{B_{3R}} \subset \Omega$, then there exists some $C = C(n, \gamma, q, \nu, \Lambda)$ and $\delta = 1 - \frac{n}{q(\gamma-1)} > 0$ so that
\begin{equation} \label{e:iwh}
\sup_{B_{R}} u \le C_{n, \gamma,\nu, \Lambda,q} \left[ \inf_{B_{2R}} u+ R^{\delta} \|\rho_{*}(\vec{F})\|_{L^{q}(B_{3R})}^{\frac{1}{\gamma-1}} + R^{\gamma^{\prime}\delta} \| f\|_{L^{\frac{q}{\gamma^{\prime}}}(B_{3R})}^{\frac{1}{\gamma-1}} \right] .
\end{equation}
\end{theorem}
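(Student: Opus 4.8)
The plan is to combine Theorem \ref{t:sup} and Theorem \ref{t:inf} by choosing a matching value of the exponent $p$ in each. First I would fix any admissible exponent, say $p_{0} \in \left(0, \tfrac{n(\gamma-1)}{n-\gamma}\right)$; this is the range allowed in Theorem \ref{t:inf}, and since Theorem \ref{t:sup} holds for \emph{all} $0 < p < \infty$, the same $p_{0}$ is admissible there. I would also fix concrete radii: apply Theorem \ref{t:inf} on the ball $B_{3R}$ with $\theta = \tfrac{2}{3}$ and $\tau = \tfrac{5}{6}$ (so that the scaled balls are $B_{2R} = B_{\theta \cdot 3R}$ and $B_{\frac{5R}{2}} = B_{\tau \cdot 3R}$, both inside $B_{3R} \subset \Omega$), and apply Theorem \ref{t:sup} with outer radius $R' = \tfrac{5R}{2}$ and inner radius $r' = R$, noting $\overline{B_{5R/2}} \subset \overline{B_{3R}} \subset \Omega$.

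With these choices, Theorem \ref{t:sup} gives
\begin{equation*}
\sup_{B_{R}} u \le C \left[ (R'-r')^{-\frac{n}{p_{0}}} \|u\|_{L^{p_{0}}(B_{5R/2})} + (R')^{\delta} \|\rho_{*}(\vec F)\|_{L^{q}(B_{5R/2})}^{\frac{1}{\gamma-1}} + (R')^{\gamma' \delta} \|f\|_{L^{q/\gamma'}(B_{5R/2})}^{\frac{1}{\gamma-1}} \right],
\end{equation*}
and since $R' - r' = \tfrac{3R}{2}$ and $R' = \tfrac{5R}{2}$ are comparable to $R$, the prefactors become $C R^{-n/p_{0}}$, $C R^{\delta}$, $C R^{\gamma'\delta}$ respectively (using $R < 1$ to bound the monotone powers), and all the integration domains enlarge to $B_{3R}$. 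Theorem \ref{t:inf} rearranged reads
\begin{equation*}
C' R^{-\frac{n}{p_{0}}} \|u\|_{L^{p_{0}}(B_{5R/2})} \le \inf_{B_{2R}} u + R^{\delta}\|\rho_{*}(\vec F)\|_{L^{q}(B_{3R})}^{\frac{1}{\gamma-1}} + R^{\gamma'\delta}\|f\|_{L^{q/\gamma'}(B_{3R})}^{\frac{1}{\gamma-1}} .
\end{equation*}
Substituting the lower bound for $R^{-n/p_{0}}\|u\|_{L^{p_{0}}(B_{5R/2})}$ from the second display into the first, and absorbing the extra copies of the $\vec F$- and $f$-terms into the right-hand side, yields exactly \eqref{e:iwh} with a constant depending only on $n,\gamma,q,\nu,\Lambda$ (the auxiliary parameters $\theta=\tfrac23$, $\tau=\tfrac56$, $p_{0}$ having been fixed as universal functions of $n,\gamma$). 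One small point to record is that Theorem \ref{t:inf} is stated for $u^{+}$ and Theorem \ref{t:sup} for $u^{+}$; since $u$ is assumed nonnegative here, $u^{+}=u$ and no further argument is needed.

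The only genuine obstacle is bookkeeping rather than mathematics: making sure the ball inclusions line up (which forces the choice $B_{3R}$ in the hypothesis so that there is room to expand $B_{2R}$ out to $B_{5R/2}$ and still stay inside $\Omega$), and checking that replacing $R'-r'$, $R'$ by quantities comparable to $R$ only changes constants — here one uses $0 < R < 1$ so that $R^{\delta} \le R'^{\delta} \le (3R)^{\delta} \le C R^{\delta}$ and similarly for the other powers, since $\delta, \gamma'\delta > 0$. Everything else is a direct concatenation of the two main theorems. I would close by remarking, as the paper does elsewhere, that iterating \eqref{e:iwh} on a chain of balls upgrades it to interior $C^{\alpha}$ regularity, and that on $\R^{n}$ with $\vec F, f \equiv 0$ it forces bounded solutions to be constant.
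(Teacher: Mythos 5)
Your proposal is correct and follows exactly the route the paper takes: the paper derives Theorem \ref{t:iwh} by concatenating Theorem \ref{t:sup} and Theorem \ref{t:inf} with a single common exponent (it names $p = \tfrac{(\gamma-1)n}{2(n-\gamma)}$ where you keep a generic admissible $p_{0}$), and your radius bookkeeping just makes explicit what the paper leaves as ``follows readily.''
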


The preceding Theorem follows readily from Theorem \ref{t:sup} and Theorem \ref{t:inf} by choosing, for instance, $p =  \frac{(\gamma-1)n}{2(n-\gamma)}$ in both theorems. In the case that $\vec{F}, f \equiv 0$ this recovers an inequality of the same form as the classical Harnack inequality.

\begin{proof}(Of Theorem \ref{t:inf}.)

Let $k > 0$ to be chosen later, $\bar{u} = u^{+} + k$ , $v = \min\{ \bar{u}, m\}$. Consider the test function $\varphi = \eta^{\gamma} v^{-\beta} \bar{u}$ for $\beta \ge \beta_{p} > 1$, where $\beta_{p} = \gamma - \frac{p}{\chi}$, $\chi = \frac{n}{n - \gamma}$.

We now proceed in a similar fashion to the proof of Theorem \ref{t:sup}. Observe,
\begin{align}
\nonumber \big \langle \rho(Du)^{\gamma-1} (D \rho)(Du),& -\beta v^{-\beta-1} \bar{u} \eta^{\gamma} D v + \eta^{\gamma} v^{-\beta} D \bar{u} \big \rangle  \\
\nonumber & = -\beta \rho(Dv)^{\gamma} v^{-\beta} \eta^{\gamma} + \rho(D\bar{u})^{\gamma} \eta^{\gamma} v^{-\beta} \\
\label{e:lb0} & = (1- \beta) \rho(Dv)^{\gamma} v^{-\beta} \eta^{\gamma}.
\end{align}
We note that $(\beta-1) \ge 1 - \beta_{p}$; this observation will simplify our computations later as, when we iterate $\beta$, we may now keep constants independent of $\beta$. They will instead depend on $\beta_{p}$, which depends on $n, \gamma, p$. 

Analogous to \eqref{e:dt3} we compute

\begin{equation} \label{e:lb1}
\Langle \rho(Du)^{\gamma-1}(D \rho)(Du), \gamma \eta^{\gamma-1} v^{-\beta} \bar{u} D \eta \Rangle \le \gamma v^{-\beta} \left[ \epsilon^{\gamma^{\prime}} \frac{ \rho(D \bar{u})^{\gamma} \eta^{\gamma}}{\gamma^{\prime}} + \frac{\rho(D \eta)^{\gamma} \bar{u}^{\gamma}}{\epsilon^{\gamma} \gamma}\right].
\end{equation}
Choosing $\epsilon$ so that $(\gamma-1)\epsilon^{\gamma^{\prime}} = -(1- \beta)/2 >(1 - \gamma)/2 > 0$ yields
\begin{align} 
\nonumber \bigg \langle \rho(Du)^{\gamma-1}(D \rho)(Du), & \gamma \eta^{\gamma-1} v^{-\beta} \bar{u} D \eta \bigg \rangle \\ 
\label{e:lb2} & \le  \frac{- (1-\beta) }{2} v^{-\beta}\eta^{\gamma}  \rho(D v)^{\gamma} + c_{n,\gamma,p} v^{-\beta} \rho(D \eta)^{\gamma} \bar{u}^{\gamma}.
\end{align}
Next we treat the $\vec{F}$ term. Proceeding as in \eqref{e:dt5} we discover
\begin{align} 
\nonumber \big\langle \vec{F}, &  -\beta v^{-\beta-1} \bar{u} \eta^{\gamma} Dv + \eta^{\gamma} v^{\beta} D \bar{u} \big\rangle  \ge - (\beta-1)  \eta^{\gamma}v^{-\beta} \left[  \left(\rho_{*}(\vec{F}) \right) \left(  \rho(Dv) \right) \right] \\
\nonumber & \ge (1- \beta) \left[ \frac{ \eta^{\gamma} v^{-\beta} \rho_{*}(\vec{F})^{\gamma^{\prime}}}{\epsilon^{\gamma^{\prime}} \gamma^{\prime}} + \epsilon^{\gamma} \frac{ \eta^{\gamma} v^{-\beta} \rho(Dv)^{\gamma}}{\gamma} \right] \\
\label{e:lb3} & \ge  \frac{(1-\beta)}{4} \eta^{\gamma} v^{-\beta} \rho(Dv)^{\gamma} -  c_{n,\gamma,p}  \eta^{\gamma} v^{-\beta} \rho_{*}(\vec{F})^{\gamma^{\prime}}
\end{align}
where we chose $\epsilon > 0$ so that $\gamma^{-1} \epsilon^{\gamma}= 1/4$. To bound the final piece, we compute

\begin{align}
\nonumber \Langle \vec{F}, \gamma \eta^{\gamma-1} v^{-\beta} \bar{u} D \eta \Rangle & \ge - \gamma \rho_{*}(\vec{F}) \eta^{\gamma-1} v^{-\beta} \bar{u} \rho(D \eta) \\
\nonumber & \ge -  \gamma v^{-\beta} \left[ \frac{ \rho_{*}(\vec{F})^{\gamma^{\prime}} \eta^{\gamma} }{\gamma^{\prime}} + \frac{ \bar{u}^{\gamma} \rho(D \eta)^{\gamma}}{\gamma} \right] \\
\label{e:lb4} & = - (\gamma-1) \rho_{*}(\vec{F})^{\gamma^{\prime}} \eta^{\gamma} v^{-\beta} - \bar{u}^{\gamma} \rho(D \eta)^{\gamma} v^{-\beta}.
\end{align}

Using that $u$ is a supersolution and plugging \eqref{e:lb0}, \eqref{e:lb2}, \eqref{e:lb3}, and \eqref{e:lb4} into \eqref{e:dt1} achieves
\begin{align*}
& \frac{-(\beta-1)}{4}  \int_{\Omega} \rho(Dv)^{\gamma} v^{-\beta} \eta^{\gamma}\\
 & \ge - c_{n,\gamma,p} \int_{\Omega} v^{-\beta} \rho(D \eta)^{\gamma} \bar{u}^{\gamma} - c_{n,\gamma,p} \int_{\Omega} \eta^{\gamma} v^{-\beta} \rho_{*}(\vec{F})^{\gamma^{\prime}} - \int_{\Omega} \eta^{\gamma} v^{-\beta+1} f
\end{align*}
or after recalling $\bar{u} \ge k$, $\beta-1 \ge \beta_{p} -1 >0$ and multiplying by $-1$,
\begin{align} \label{e:lb5}
\nonumber \int_{\Omega} \rho(Dv)^{\gamma} v^{-\beta} \eta^{\gamma} &\le c_{n,\gamma,p} \bigg[ \int_{\Omega} v^{-\beta} \bar{u}^{\gamma} \rho(D \eta)^{\gamma} + \\
& + \int_{\Omega} \eta^{\gamma} v^{-\beta} \bar{u}^{\gamma} \frac{\rho_{*}(\vec{F})^{\gamma^{\prime}}}{k^{\gamma}} + \int_{\Omega} \eta^{\gamma} v^{-\beta} \bar{u}^{\gamma} \frac{f}{k^{\gamma-1}} \bigg].
\end{align}

Note the striking similarity to \eqref{e:dt8}. The only difference being the benefit that \eqref{e:lb5} has no constants depending on $\beta$. Hence, we follow the process done in the proof of Theorem \ref{t:sup} and choose 
$$
k = \| \rho_{*}(\vec{F})\|_{L^{q}}^{\frac{1}{\gamma-1}} + \| f \|_{L^{\frac{q}{\gamma^{\prime}}}}^{\frac{1}{\gamma-1}}.
$$ 
Recalling $0 < \theta < \tau < 1$, setting $w = v^{- \beta/\gamma} \bar{u}$ and proceeding as in the proof of Theorem \ref{t:sup} leads to
\begin{equation} \label{e:lb6}
\left( \int_{B_{R \theta}} \bar{u}^{(\gamma-\beta)\chi} \right)^{\frac{1}{\chi}} \le C_{n, \gamma,\nu, \Lambda, p,q, R \theta, R\tau} \int_{B_{R \tau}} \bar{u}^{\gamma-\beta}.
\end{equation}
The dependence on $R \theta, R\tau$ comes from the magnitude of the derivative of the cut-off function. Moreover, in \eqref{e:lb6}, the dependence on $p$ that is not present in \eqref{e:dt13} is due to having assumed $\beta \ge \beta_{p}$. To improve readability, we write $C = C_{n,\gamma,\nu,\Lambda,p,q,R \theta,R\tau}$ and suppose $R=1$ through the end of \eqref{e:lb8.4}. 
Now, whenever $\gamma- \beta < 0$ \eqref{e:lb6} ensures
\begin{equation} \label{e:lb7}
C  \| \bar{u}\|_{L^{\gamma-\beta}(B_{\tau})} \le \| \bar{u}\|_{L^{(\gamma-\beta)\chi}(B_{\theta})}.
\end{equation} 
Since $\inf v^{+} = \lim_{p \to - \infty} \| v\|_{L^{p}}$, iterating as in Theorem \ref{t:sup}, for any $\beta > \gamma$ \eqref{e:lb7} implies,
\begin{equation} \label{e:lb8}
C\| \bar{u}\|_{L^{\gamma-\beta}(B_{\tau})} \le \inf_{B_{\theta}} \bar{u}.
\end{equation}

On the other hand, whenever $0 < \beta < \gamma-1$, \eqref{e:lb6} guarantees
\begin{equation} \label{e:lb8.1}
\| \bar{u}\|_{L^{(\gamma-\beta)\chi}(B_{\theta})} \le C  \|\bar{u}\|_{L^{\gamma-\beta}(B_{\tau})}.
\end{equation}
Recalling $- \beta < -1$, we can iterate \eqref{e:lb8.1} finitely many times depending on $p,p_{0}$ so that when $0 < p_{0} < p < (\gamma-1)\chi$, choosing $\gamma-\beta = p_{0}$ and finitely many iterations of \eqref{e:lb8.1} ensures
\begin{equation} \label{e:lb8.2}
\| \bar{u}\|_{L^{p}(B_{\theta})} \le C \cdot C(p_{0}) \| \bar{u}\|_{L^{p_{0}}(B_{\tau})}.
\end{equation}

%Choosing $\beta = 2 \gamma$ this reads
%$$
%C_{n,\rho,\gamma,q} \| \bar{u}\|_{L^{-\gamma}(B_{2})} \le \inf_{B_{1}} \bar{u},
%$$
%or equivalently
%$$
%\inf_{B_{1}} \bar{u} \ge C_{n,\rho,\gamma,q} \left( \int_{B_{2}} \bar{u}^{-\gamma} \int \bar{u}^{\gamma}  \right)^{\frac{-1}{\gamma}} \| \bar u\|_{L^{\gamma}(B_{2})}.
%$$
We claim the proof is complete once we show that there exists $p_{0} > 0$ so that
\begin{equation} \label{e:lb8.3}
\int_{B_{\tau}} \bar{u}^{-p_{0}} \int_{B_{\tau}} \bar{u}^{p_{0}} \le C.
\end{equation}

Indeed, choosing $\beta = \gamma + p_{0}$ in \eqref{e:lb8} combined with \eqref{e:lb8.2} and \eqref{e:lb8.3} yields
\begin{align} \label{e:lb8.4}
\inf_{B_{1}} \bar{u} & \ge C \cdot C(p_{0}) \| \bar{u}\|_{L^{-p_{0}}} \\
\nonumber & =C \cdot C(p_{0}) \left( \| \bar{u}\|_{L^{-p_{0}}} \| \bar{u}\|_{L^{p_{0}}}^{-1} \right) \| \bar{u}\|_{L^{p_{0}}} \\
\nonumber & \ge C \cdot C(p_{0}) \| \bar{u}\|_{L^{p}}.
\end{align}
It happens that $p_{0} = p_{0}(n,\gamma,\nu,\Lambda)$ so that the constant $C(p_{0})$ can be absorbed into our universal constant. Recalling that $\bar{u} = u^{+} + k$ and using scaling, this says
$$
\inf_{B_{R \theta}} u^{+} + R^{\delta} \|\rho_{*}(\vec{F})\|_{L^{q}(B_{R})}^{\frac{1}{\gamma-1}} + R^{\gamma^{\prime} \delta} \| f\|_{L^{\frac{q}{\gamma^{\prime}}}(B_{R})}^{\frac{1}{\gamma-1}} \ge C_{n, \gamma,  \nu, \Lambda, q, p, \theta, \tau} \| u^{+}\|_{L^{p}(B_{R \tau})},
$$
as desired.

Hence, it only remains to show \eqref{e:lb8.3}. We consider the test function $\varphi = \frac{\eta^{\gamma}}{\bar{u}^{\gamma-1}}$. Note,
$$
D \varphi = \gamma \left( \frac{\eta}{\bar{u}} \right)^{\gamma-1} D \eta - (\gamma-1) \left( \frac{\eta}{\bar{u}} \right)^{\gamma} D \bar{u}.
$$
We estimate as usual

\begin{equation} \label{e:lb9}
\Langle \rho(D \bar{u})^{\gamma-1} (D \rho) (D \bar{u}), - (\gamma-1) \left( \frac{\eta}{\bar{u}} \right)^{\gamma} D \bar{u} \Rangle= - (\gamma-1) \rho\left( \frac{ \eta D \bar{u}}{\bar{u}}\right)^{\gamma} .
\end{equation}
and choosing $\epsilon > 0$ so that $(\gamma-1) \epsilon^{\gamma^{\prime}} = (\gamma-1)/2$.
\begin{align} \label{e:lb10}
\nonumber \Langle \rho(D \bar{u})^{\gamma-1} (D \rho)(D \bar{u}), \gamma \left( \frac{\eta}{\bar{u}} \right)^{\gamma-1} D \eta \Rangle & \le   \gamma \left[\frac{ \epsilon^{\gamma^{\prime}}}{\gamma^{\prime}} \rho \left( \frac{ \eta D \bar{u}}{\bar{u}} \right)^{\gamma} + \frac{ \rho(D \eta)^{\gamma}}{\epsilon^{\gamma} \gamma} \right] \\
& \le \frac{\gamma-1}{2} \rho \left( \frac{ \eta D \bar{u}}{\bar{u}} \right)^{\gamma} + c_{\gamma} \rho(D \eta)^{\gamma}.
\end{align}

On the other hand, since $\bar{u} \ge k$
\begin{align}
\nonumber \Langle \vec{F}, \gamma \left( \frac{\eta}{\bar{u}} \right)^{\gamma-1} D \eta \Rangle & \ge - \gamma \rho_{*}(\vec{F})^{\gamma^{\prime}} \left( \frac{ \eta}{\bar{u}} \right)^{\gamma-1} \rho(D \eta) \\
\label{e:lb11}& \ge - \gamma  \left[ \frac{\rho_{*}(\vec{F})^{\gamma^{\prime}} \eta^{\gamma}}{k^{-\gamma} \gamma^{\prime}} + \frac{ \rho(D \eta)^{\gamma}}{\gamma} \right]
\end{align}
and choosing $\epsilon > 0$ so that $(\gamma^{\prime})^{-1} \epsilon^{\gamma} = 1/4$ guarantees
\begin{align}
\nonumber \langle \vec{F}, -(\gamma-1) \left( \frac{\eta}{\bar{u}} \right)^{\gamma} D \bar{u} \rangle & \ge - (\gamma-1) \left( \frac{\eta}{\bar{u}} \right)^{\gamma} \left[ \frac{ \epsilon^{\gamma} \rho(D \bar{u})^{\gamma}}{\gamma} + \frac{\rho_{*}(\vec{F})^{\gamma^{\prime}}}{\epsilon^{\gamma^{\prime}} \gamma^{\prime}} \right]  \\
\label{e:lb12}& \ge - \frac{(\gamma-1)}{4} \rho \left( \frac{ \eta D \bar{u}}{\bar{u}} \right)^{\gamma} - c_{\gamma} \eta^{\gamma} k^{-\gamma} \rho_{*}(\vec{F})^{\gamma^{\prime}}.
\end{align}
Finally, note
\begin{equation} \label{e:lb13}
\int f \eta^{\gamma} \bar{u}^{-(\gamma-1)} \ge - k^{-(\gamma-1)} \| f \|_{\frac{n}{\gamma}} \| \eta \|_{L^{\frac{n \gamma}{n-\gamma}}}^{\gamma} \ge - c_{\gamma,n,\nu} k^{-(\gamma-1)} \| f\|_{\frac{n}{\gamma}} \|D \eta \|_{L^{\gamma}}^{\gamma}
\end{equation}

Combining \eqref{e:lb9} - \eqref{e:lb13}, with the fact that $u$ is a supersolution yields
\begin{align}
\nonumber - \int_{\Omega} &  \eta^{\gamma} \rho \left(  D (\log \bar{u}) \right)^{\gamma} \ge - c_{n,\gamma, \nu} \left[ \int \rho( D \eta)^{\gamma} \left( 1 + \frac{\|f\|_{\frac{n}{\gamma}}}{k^{\gamma-1}} \right) + \int \frac{ \rho_{*}(\vec{F})^{\gamma^{\prime}}}{k^{\gamma}} \eta^{\gamma} \right] \\
& \ge  - c_{n,\gamma, \nu} \left[ \int \rho(D \eta)^{\gamma} \left( 1 + k^{-(\gamma-1)} \|f\|_{\frac{n}{\gamma}} + k^{-\gamma} \|\rho_{*}(\vec{F})\|_{L^{\frac{n}{\gamma-1}}}^{\gamma^{\prime}} \right) \right] ,
\end{align}
where the 2nd inequality follows from the first by Holder and Sobolev embedding similar to \eqref{e:lb13}. 
Choosing $k = \| f \|_{\frac{n}{\gamma}}^{\frac{1}{\gamma-1}} + \|\rho_{*}(\vec{F})\|_{L^{\gamma^{\prime}}}^{\frac{1}{\gamma-1}}$ this can be written
\begin{equation}  \label{e:lb14}
\int_{\Omega} \eta^{\gamma} \rho(  D ( \log \bar{u}))^{\gamma} \le c_{n, \gamma, \nu} \int \rho(D \eta)^{\gamma}.
\end{equation}
Now, for all $B_{2r}^{\prime} \subset B_{2R}$, we can choosing $\eta \equiv 1$ on $B_{r}^{\prime}$ and $\eta \equiv 0$ on $\Omega \setminus B_{2r}^{\prime}$ with $|D \eta| \le \frac{2}{r}$, \eqref{e:lb14} says

\begin{equation*}
\int_{B_{r}^{\prime}} |D ( \log \bar{u})|^{\gamma} \le c_{n,\gamma,\nu} \int_{B_{r}^{\prime}} \rho(D(\log \bar{u}))^{\gamma} \le c_{n,\gamma,\nu, \Lambda} r^{n - \gamma}
\end{equation*}
or taking the $\gamma$-root and multiplying by $r^{1 - \frac{n}{\gamma}}$ that is
\begin{equation} \label{e:lb14.1}
r^{1-\frac{n}{\gamma}} \left( \int_{B_{r}^{\prime}} |D(\log \bar{u})|^{\gamma} \right)^{\frac{1}{\gamma}} \le C_{n,\gamma,\nu,\Lambda}. 
\end{equation}

Noticing $(r^{-n})^{\frac{1}{\gamma^{*}}} = r^{1- \frac{n}{\gamma}}$, we consecutively apply Jensen's inequality, the Poincare inquality in a ball, and \eqref{e:lb14.1} to achieve
\begin{align} \label{e:lb15}
\frac{1}{r^{n}} \int_{B_{r}^{\prime}} |\log \bar{u} - (\log \bar{u})_{B_{r}^{\prime}}|  &\le C \left( \frac{1}{r^{n}} \int_{B_{r}^{\prime}} |\log \bar{u}  - (\log \bar{u})_{B_{r}^{\prime}}|^{\gamma^{*}} \right)^{\frac{1}{\gamma^{*}}} \\
\nonumber & \le C_{n, \gamma, \nu,\Lambda}.
\end{align}
Since this holds uniformly for all $B_{2r}^{\prime} \subset B_{2R}$, and hence \eqref{e:lb15} holds for all $B_{r}^{\prime} \subset B_{R}$, this ensures $\log \bar{u} \in BMO(B_{R})$ and consequently, by the John-Nirenberg lemma there exists $0< p_{0}$ depending only on $n$ and the constant in \eqref{e:lb15} so that
\begin{equation} \label{e:lb16}
\sup_{B\subset B_{3}} \frac{1}{|B|} \int_{B} e^{p_{0} |\log \bar{u} - (\log \bar{u})_{B}|} < \infty.
\end{equation}
Finally, since $e^{|a-b|} \ge 1$ we note that $e^{p_{1}|a-b|} \le e^{p_{0}|a-b|}$ if $p_{1} \le p_{0}$. In particular, without loss of generality, suppose $p_{0} < \frac{(\gamma-1)n}{n-\gamma}$. But then, \eqref{e:lb16} implies \eqref{e:lb8.3}.
\end{proof}

\begin{theorem} [Improvement of Oscillation] \label{t:ioo}
Suppose $u, \rho, \gamma, \vec{F},f$, and $q$ are as in Theorem \ref{t:iwh}. If $\overline{B_{3R}} \subset \Omega$, then for all $0 < \theta < 1$,
$$
\osc_{B_{\theta R}} u \le C_{n,\gamma,q, \nu, \Lambda}  \theta^{\alpha} \left[ \osc_{B_{R}} u  +  \| \rho_{*}(\vec{F})\|^{\frac{1}{\gamma-1}}_{L^{q}(B_{2R})} + \|f\|_{L^{\frac{q}{\gamma^{\prime}}}(B_{2R})}^{\frac{1}{\gamma-1}} \right].
$$
\end{theorem}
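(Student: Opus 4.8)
The plan is to derive this from the weak Harnack inequality (Theorem~\ref{t:iwh}) by the classical oscillation-decay mechanism of De Giorgi--Nash--Moser, and then to feed the resulting recursion into Lemma~\ref{l:ibs}. Since $u$ is in particular a subsolution of \eqref{e:dt1}, Theorem~\ref{t:sup} gives $u\in L^\infty_{\loc}(\Omega)$, so for $0<r\le R$ the numbers $M(r):=\sup_{B_r}u$ and $m(r):=\inf_{B_r}u$ are finite, $\osc_{B_r}u=M(r)-m(r)\le M(r)<\infty$, and $r\mapsto\osc_{B_r}u$ is non-decreasing.

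The one structural point is that $M(r)-u$ and $u-m(r)$ are non-negative on $B_r$ and solve there an equation of exactly the type \eqref{e:dt1} with the same structural constants $\nu,\Lambda$. For $u-m(r)$ this is immediate, a constant shift leaving $Du$ unchanged. For $w:=M(r)-u$ we have $Dw=-Du$; setting $\tilde\rho(x,\xi):=\rho(x,-\xi)$ one checks directly that $\tilde\rho$ satisfies \eqref{e:meas}--\eqref{e:eb} with the \emph{same} $\nu,\Lambda$, that $(D\tilde\rho_x)(\zeta)=-(D\rho_x)(-\zeta)$, and that $\tilde\rho_*(x,\zeta)=\rho_*(x,-\zeta)$; hence $w$ solves \eqref{e:dt1} with $(\rho,\vec F,f)$ replaced by $(\tilde\rho,-\vec F,-f)$, and $\|\tilde\rho_*(\cdot,-\vec F)\|_{L^q}=\|\rho_*(\cdot,\vec F)\|_{L^q}$. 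Thus Theorem~\ref{t:iwh} is available for both $w$ and $u-m(r)$ with a constant depending only on $n,\gamma,q,\nu,\Lambda$.

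Now the oscillation step. Fix $r\le R$ and apply Theorem~\ref{t:iwh} at radius $r/3$ (so that its outermost ball becomes $B_r$) to each of $v_1=M(r)-u$ and $v_2=u-m(r)$. With
$$
\sigma(r):=r^{\delta}\,\|\rho_*(\cdot,\vec F)\|_{L^q(B_r)}^{\frac{1}{\gamma-1}}+r^{\gamma'\delta}\,\|f\|_{L^{q/\gamma'}(B_r)}^{\frac{1}{\gamma-1}},\qquad \delta=1-\tfrac{n}{q(\gamma-1)}>0,
$$
which is non-decreasing in $r$, the two applications read
$$
M(r)-m(r/3)\le C\big[\,M(r)-M(2r/3)+\sigma(r)\,\big],\qquad M(r/3)-m(r)\le C\big[\,m(2r/3)-m(r)+\sigma(r)\,\big].
$$
Adding these, noting that the right-hand sides sum to $C[\osc_{B_r}u-\osc_{B_{2r/3}}u+2\sigma(r)]$, using $\osc_{B_{2r/3}}u\ge\osc_{B_{r/3}}u$, and rearranging (we may assume $C\ge1$), one obtains
$$
\osc_{B_{r/3}}u\le\tilde\delta\,\osc_{B_r}u+2\sigma(r),\qquad \tilde\delta:=\tfrac{C-1}{C+1}\in[0,1),
$$
which is exactly the hypothesis of Lemma~\ref{l:ibs} with $\tau=1/3$ (take $\omega(r)=\osc_{B_r}u$).

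Finally, Lemma~\ref{l:ibs} with, say, $\mu=\tfrac12$ yields $\osc_{B_r}u\le C[(r/R)^{\alpha_0}\osc_{B_R}u+\sigma(r^{1/2}R^{1/2})]$ for all $r\le R$. Putting $r=\theta R$ and using the explicit prefactors in $\sigma$ gives $\sigma(\theta^{1/2}R)\le\theta^{\delta/2}\sigma(R)$ (since $\gamma'>1$ forces $\theta^{\gamma'\delta/2}\le\theta^{\delta/2}$); bounding $\sigma(R)$ by the $L^q(B_{2R})$ and $L^{q/\gamma'}(B_{2R})$ norms and, under the standing normalization $R\le1$, absorbing the powers $R^{\delta},R^{\gamma'\delta}\le1$ into the constant, produces the asserted estimate with $\alpha:=\min\{\alpha_0,\delta/2\}$. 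I expect the only genuinely delicate point to be the structural observation of the second paragraph — that $M(r)-u$ solves an equation of the same type, so that Theorem~\ref{t:iwh} applies to it with constants independent of $r$; the oscillation algebra and the appeal to Lemma~\ref{l:ibs} are routine, and all the real analytic content already lives in Theorems~\ref{t:sup} and \ref{t:inf}.
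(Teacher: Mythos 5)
Your proof is correct and follows essentially the same route as the paper: apply a Harnack-type estimate to $M-u$ and $u-m$, add the two resulting inequalities to obtain $\osc_{B_{\tau r}}u\le\tilde\delta\,\osc_{B_r}u+C\sigma(r)$, and conclude with Lemma~\ref{l:ibs} (the paper invokes Theorem~\ref{t:inf} with $p=1$ and scale factor $1/4$ rather than Theorem~\ref{t:iwh} with factor $1/3$, but this is the same mechanism). Your explicit verification that $M-u$ solves an equation of the same structural type after replacing $\rho(x,\xi)$ by $\rho(x,-\xi)$ --- necessary because \eqref{e:p1h} is only \emph{positive} homogeneity --- is a point the paper's proof asserts without comment, and is worth keeping.
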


\begin{proof}
For $0 < s < 2R$ let 
$$
M_{s} = \sup_{B_{s}} u \qquad \text{and} \qquad m_{s} = \inf_{B_{s}} u.
$$
Then $v \in \{M_{2R} - u, u - m_{2r}\}$ is a positive solution of \eqref{e:dt1} on $B_{2R}$.

Consider $p = 1$ and $\theta = 1/2 < \tau < 1$ in Theorem \ref{t:inf}. That is, for a positive solution $v$,
\begin{equation} \label{e:ioo1}
\inf_{B_{R/2}} v + G(R) \ge C R^{-n} \int v
\end{equation}
where 
$$
G(R) = R^{\delta} \| \rho_{*}(\vec{F})\|^{\frac{1}{\gamma-1}}_{L^{q}(B_{R})}   + R^{\gamma^{\prime} \delta} \| f\|_{L^{\frac{q}{\gamma^{\prime}}}(B_{2R})}^{\frac{1}{\gamma-1}}.
$$
Note, $\inf_{B_{R/2}} M_{2R} - u = M_{2R} - M_{R/2}$ and $\inf_{B_{R/2}} u - m_{2R} = m_{r/2} - m_{2r}$. Therefore, applying \eqref{e:ioo1} to $M_{2R} - u$ and $u - m_{2R}$ yields
\begin{align*}
M_{2R} - M_{R/2} + G(R) &\ge  C R^{-n} \int M_{2R} - u \\
m_{R/2} - m_{2R} + G(R) & \ge C R^{-n} \int u - m_{2R}.
\end{align*}
Adding yields
$$
(M_{2R} - m_{2R}) - (M_{R/2} - m_{R/2}) + 2 G(R)\ge C (M_{2R} - m_{2R} ),
$$
or equivalently,
$$
\osc_{B_{R/2}} u \le (1 - C) \osc_{B_{2R}} u + 2\left[ R^{\delta} \| \rho_{*}(\vec{F})\|^{\frac{1}{\gamma-1}}_{L^{q}(B_{R})}   + R^{\gamma^{\prime}\delta} \| f\|_{L^{\frac{q}{\gamma^{\prime}}}(B_{2R})}^{\frac{1}{\gamma-1}} \right].
$$
Lemma \ref{l:ibs} verifies the result by choosing the parameters from Lemma \ref{l:ibs} by $\tau = 1/4$,$\tilde \delta = (1-C)$, and $\mu (1 - \frac{n}{q(\gamma-1)}) > \alpha$. The latter can be done by making $\alpha$ smaller if necessary. Notably $\alpha = \alpha(1/4,\tilde \delta)$ so it has the expected dependencies.
\end{proof}

Holder regularity is a classic result of the improvement of oscillation in Theorem \ref{t:ioo}.
\begin{corollary}  \label{c:calpha}
Suppose $u, \rho, \gamma, \vec{F},f$ and $q$ are as in Theorem \ref{t:iwh}. Then $u \in C_{\loc}^{\alpha}(\Omega)$ for some $\alpha = \alpha(n, \gamma, \nu, \Lambda, q)$.
\end{corollary}

Last, we conclude with a Liouville-type theorem in the case that $f, \vec{F} \equiv 0$.

\begin{theorem} \label{t:liouville} [Liouville Theorem]
Let $\rho, \gamma,$ and $u$ be as in Theorem \ref{t:iwh}. Suppose additionally that $f, \vec{F} \equiv 0$. If $\Omega = \rn$ and $u$ is bounded from above or below, then $u$ is constant.
\end{theorem}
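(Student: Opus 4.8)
The plan is to read the statement directly off the improvement-of-oscillation estimate of Theorem \ref{t:ioo}. The implication ``$u$ constant $\Rightarrow$ $u$ bounded'' is immediate, so assume $u$ is a bounded solution of \eqref{e:homo} on $\Omega=\rn$.

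First I would perform a harmless normalization: set $v=u-\inf_{\rn}u$, which is finite since $u$ is bounded. Because the integrand \eqref{e:integrand} depends on $u$ only through $Du$, and $Dv=Du$, the function $v$ is again a solution of \eqref{e:homo}; moreover $v$ is bounded, nonnegative, and $\osc_{B_s}v=\osc_{B_s}u$ for every ball $B_s$. Hence $v$ satisfies the hypotheses of Theorem \ref{t:iwh}, and in particular of Theorem \ref{t:ioo}, with $\vec F\equiv f\equiv 0$.

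Now fix $r_{0}>0$. For every $R>r_{0}$ we have $\overline{B_{3R}}\subset\rn=\Omega$, so Theorem \ref{t:ioo} applies with $\theta=r_{0}/R\in(0,1)$; since $\vec F\equiv f\equiv 0$ the forcing terms disappear and we obtain
\[
\osc_{B_{r_{0}}}u=\osc_{B_{\theta R}}v\le C_{n,\rho,\gamma,q}\,\Big(\tfrac{r_{0}}{R}\Big)^{\alpha}\osc_{B_{R}}v\le 2\,C_{n,\rho,\gamma,q}\,\|u\|_{L^{\infty}(\rn)}\,\Big(\tfrac{r_{0}}{R}\Big)^{\alpha},
\]
with $\alpha>0$ and the constant independent of $R$. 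Letting $R\to\infty$ forces $\osc_{B_{r_{0}}}u=0$, and since $r_{0}>0$ was arbitrary, $u$ is constant on $\rn$.

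There is essentially no hard step here; the two points deserving a moment's care are that $v$ is still a solution after the shift (which is clear, since only $Du$ enters the equation), and that the constant $C_{n,\rho,\gamma,q}$ in Theorem \ref{t:ioo} carries no hidden dependence on $R$ once $\vec F,f\equiv 0$ — this is exactly what the explicit prefactors $R^{\delta}$ and $R^{\gamma^{\prime}\delta}$ on the forcing terms record, and they vanish in the present situation, so the passage $R\to\infty$ is legitimate. (Alternatively one could apply the Harnack inequality of Theorem \ref{t:iwh} to $M-u$ and $u-m$, with $M=\sup_{\rn}u$ and $m=\inf_{\rn}u$, and let $R\to\infty$; I prefer the oscillation route above because it avoids invoking that $M-u$ is again a solution.)
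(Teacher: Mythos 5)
Your proof is correct and follows essentially the same route as the paper's: both read the conclusion directly off the improvement-of-oscillation estimate of Theorem \ref{t:ioo} with the forcing terms absent, obtaining $\osc_{B_{r_0}}u\lesssim (r_0/R)^{\alpha}\|u\|_{L^{\infty}}\to 0$ as $R\to\infty$ (the paper phrases this as iterating with a fixed $\theta$ to get a geometric factor $\gamma^{k}$, which is the same power decay). Your preliminary shift $v=u-\inf u$ is a harmless extra precaution that the paper omits, since $u$ is already assumed nonnegative as in Theorem \ref{t:iwh}.
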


\begin{proof}
It suffices to assume $u \ge 0$ by replacing $u$ with $- u + \sup_{\rn} u$ ($u - \inf_{\rn} u$, resp.) when $u$ is bounded above (below, resp.). Since $f, \vec{F} \equiv 0$, and $\Omega = \rn$, Theorem \ref{t:iwh} implies $u$ is bounded above. Indeed, for $x \in \rn$, $u(x) \le \sup_{B_{|x|}} u \le C_{n,\gamma,\nu,\lambda, Q} \inf_{B_{2|x|}} u \le C u(0)$. In particular, $\|u\|_{L^{\infty}(\rn)} < \infty$. Now, iterating Theorem \ref{t:ioo} says there exists $0 < \theta < 1$ so that for any $R > 0$ and integer $k$,
$$
\osc_{B_{R}} u \le \theta^{k} \osc_{B_{2^{k} R}} \le  \theta^{k} \left( 2 \|u \|_{L^{\infty}(\rn)} \right).
$$
Taking $k$ and $R$ to infinity consecutively completes the proof.
\end{proof}

\bibliographystyle{alpha}
\bibdata{references}
\bibliography{references}
\end{document}